\numberwithin{equation}{section}
\theoremstyle{plain}
\newtheorem{theorem}{Theorem}[section]
\theoremstyle{plain}
\newtheorem{corollary}[theorem]{Corollary}
\theoremstyle{plain}
\newtheorem{lemma}[theorem]{Lemma}
\theoremstyle{plain}
\newtheorem{proposition}[theorem]{Proposition}
\theoremstyle{definition}
\newtheorem{definition}[theorem]{Definition}
\theoremstyle{remark}
\newtheorem*{remark}{Remark}
\theoremstyle{remark}
\theoremstyle{remark}
\theoremstyle{definition}
\newtheorem{example}{Example}[section]
\newtheorem{proposition*}[example]{Proposition}
\theoremstyle{plain}
\theoremstyle{definition}
\theoremstyle{plain}
\newtheorem*{theorem*}{Theorem}
\newcommand\rurl[1]{%
  \href{https://#1}{\nolinkurl{#1}}%
}
\newcommand{\N}{\mathbb{N}}
\newcommand{\R}{\mathbb{R}}
\newcommand{\C}{\mathbb{C}}
\renewcommand{\H}{\mathbb{H}}
\newcommand{\J}{\mathcal{J}}
\newcommand{\ST}{\mathcal{D}_{S}}
\newcommand{\SI}{S^{-1}}
\newcommand{\F}{\mathcal{F}}
\newcommand{\w}[1]{\mathcal{D}_{#1}}
\newcommand{\h}[1]{\widehat{#1}}
\newcommand{\Mp}{\mathrm{Mp}(2d,\R)}
\newcommand{\Sp}{\mathrm{Sp}(2d,\R)}
\newcommand{\AF}{\mathrm{A}}
\newcommand{\Tp}{\mathrm{Tp}}
\newcommand{\MSpace}[2]{\mathrm{M}^{#1}_{#2}(\R^d)}
\newcommand{\norm}[1]{\left\lVert#1\right\rVert}
\newcommand{\abs}[1]{\left\lvert#1\right\rvert}
\numberwithin{equation}{section}
\newcommand\myeq[2]{\overset{\scriptscriptstyle #1}{#2}}
\DeclareMathOperator*{\essinf}{ess\,inf}
\DeclareMathOperator*{\esssup}{ess\,sup}
\DeclareMathOperator{\dom}{dom}
\DeclareMathOperator{\diag}{diag}
\newcommand{\dotp}{
    \mathop{
        \mathchoice{\vcenter{\hbox{\Huge$\cdot$}}}
                   {\vcenter{\hbox{\Huge$\cdot$}}}
                   {\vcenter{\hbox{\Large$\cdot$}}}
                   {\vcenter{\hbox{\small$\cdot$}}}
    }
}
\newcommand\thankssymb[1]{\textsuperscript{\@fnsymbol{#1}}}
\begin{document}

\title[The metaplectic action on modulation spaces]{The metaplectic action on modulation spaces}

\author[H.\ F\"uhr]{Hartmut Führ \orcidlink{0000-0001-5718-6449
}\thankssymb{1}}
\email{fuehr@mathga.rwth-aachen.de}

\author[I.\ Shafkulovska]{Irina Shafkulovska \orcidlink{0000-0003-1675-3122} \thankssymb{2}\textsuperscript{,}\thankssymb{3}}
\email{irina.shafkulovska@univie.ac.at}

\address{\thankssymb{1}Lehrstuhl f\"ur Geometrie und Analysis, RWTH Aachen University, D-52056 Aachen, Germany}
\address{\thankssymb{2}Faculty of Mathematics, University of Vienna, Oskar-Morgenstern-Platz 1, 1090 Vienna, Austria}
\address{\thankssymb{3}Acoustics Research Institute, Austrian Academy of Sciences, Wohllebengasse 12-14, 1040 Vienna, Austria}
\thanks{I. Shafkulovska was supported by the Austrian Science Fund (FWF) project P33217 and thanks the Austrian Academy of Sciences for hosting an internship at the Acoustics Research Institute.}

\begin{abstract}
    We study the mapping properties of metaplectic operators $\widehat{S}\in \mathrm{Mp}(2d,\mathbb{\R})$ on modulation spaces of the type  $\mathrm{M}^{p,q}_m(\mathbb{\R}^d)$. 
    Our main result is a full characterisation of the pairs $(\widehat{S},\mathrm{M}^{p,q}{}(\mathbb{R}^d))$ for which the operator  $\widehat{S}:\mathrm{M}^{p,q}{}(\mathbb{\R}^d) \to \mathrm{M}^{p,q}{}(\mathbb{\R}^d)$ is \textit{(i)} well-defined, \textit{(ii)}~bounded. It turns out that these two properties are equivalent, and they entail that $\h{S}$ is a Banach space automorphism. For polynomially bounded weight functions, we provide a simple sufficient criterion 
    to determine whether the well-definedness (boundedness) of ${\widehat{S}:\mathrm{M}^{p,q}{}(\mathbb{\R}^d)\to \mathrm{M}^{p,q}(\mathbb{\R}^d)}$ transfers to $\widehat{S}:\mathrm{M}^{p,q}_m(\mathbb{\R}^d)\to \mathrm{M}^{p,q}_m(\mathbb{\R}^d)$.
    
\vspace{2em}
\noindent \textbf{Keywords.} Modulation spaces, metaplectic operators, symplectic group

\noindent \textbf{AMS subject classifications.}  42B35, 46F12

\end{abstract}

\maketitle

\section{Introduction}
This paper studies the interaction of two fundamental objects (or classes of objects) in time-frequency analysis: the metaplectic group on the one hand, and the scale of modulation spaces $\MSpace{p,q}{m}$ on the other. The metaplectic group can be understood as the fundamental symmetry group in time-frequency analysis; see \cite{Folland1989, Groechenig2001} for more details concerning the metaplectic group and its role in time-frequency analysis and the representation theory of the Heisenberg group. The group is often employed to reduce the study of general cases to more specific, concrete settings; recent applications of this principle can be found in \cite{grohsLiehr4, GrohsLiehr2022, NicolaTrapasso2020} as recent illustrations of this principle. 
Modulation spaces, on the other hand, are spaces consisting of signals (distributions) whose windowed Fourier transform exhibits a certain decay, as quantified by a weighted mixed $L^p$-norm. We refer to \cite{Cordero_Rodino, Groechenig2001} for more background on modulation spaces. 

We are interested in \textit{invariance properties}, asking under which conditions an operator $\widehat{S}$ belonging to the metaplectic group, initially defined as a unitary operator on $L^2(\R^d)$, extends to a bounded operator $\MSpace{p,q}{m} \to \MSpace{p,q}{m}$. The question was first raised in \cite{CauliEtAl2019}, where the invariance of $\MSpace{p}{}$ under arbitrary metaplectic operators was instrumental in establishing generalized Strichartz estimates. For a subclass of metaplectic operators which are also Fourier integral operators, sufficient conditions and a counterexample hinting at necessary conditions were already established in \cite[Sec. 7]{CorderoEtAl2010_2}. Given the relevance both of metaplectic operators and modulation spaces to the field of time-frequency analysis, understanding their interplay can be regarded as a fundamental question of independent interest. As the subsequent results and proofs show, the question turns out to be rather more nuanced than what the already understood special case $p=q$ and $m\equiv 1$, considered in \cite{CauliEtAl2019}, suggests. 

\subsection{Contributions}
In this subsection, we provide a brief overview of the main results of the article. For definitions of the various objects in the following theorems, we refer to the subsequent sections and the background material contained in the textbooks \cite{Cordero_Rodino, Groechenig2001}. 

Recall that the metaplectic group is the double cover of the symplectic group, i.e., there exists a surjective Lie group homomorphism
$\pi^{\mathrm{Mp}}:\Mp\to\Sp$.
Our main new result, restated below as \thref{thm:main_unweighted}, is a characterisation of triples $(p,q,\widehat{S})$ of exponents $p,\,q\in[1,\infty]$ and metaplectic operators $\widehat{S}\in\Mp$ such that 
\begin{equation}
    \widehat{S}:\MSpace{p,q}{}\to\MSpace{p,q}{}
\end{equation}
is well-defined.
\begin{theorem*}
Let $p,q\in [1,\infty]$ and $\widehat{S}\in \Mp$ be given. The following statements are equivalent:
\begin{enumerate}
\item $\widehat{S}:\MSpace{p,q}{} \to \MSpace{p,q}{} $ is well-defined.
\item $\widehat{S}:\MSpace{p,q}{} \to \MSpace{p,q}{} $ is well-defined and bounded.
\item One of the following conditions holds:
\begin{enumerate}[(i)]
\item $p=q$, or
\item $p\neq q$ and the projection $\pi^{\mathrm{Mp}}(\widehat{S})=S\in \Sp $ is an upper block triangular matrix.
\end{enumerate}
\end{enumerate}
\end{theorem*}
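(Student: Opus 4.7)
The implication $(2)\Rightarrow(1)$ is immediate. For $(1)\Rightarrow(2)$, I would invoke the closed graph theorem: every metaplectic operator is a continuous automorphism of $\mathcal{S}(\R^d)$ and of $\mathcal{S}'(\R^d)$, and $\mathrm{M}^{p,q}(\R^d)\hookrightarrow\mathcal{S}'(\R^d)$ continuously, so the graph of $\widehat{S}|_{\mathrm{M}^{p,q}(\R^d)}$ is automatically closed in $\mathrm{M}^{p,q}(\R^d)\times\mathrm{M}^{p,q}(\R^d)$. Applying the same reasoning to $\widehat{S}^{-1}=\widehat{S^{-1}}$ yields the automorphism property.

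For the sufficiency direction $(3)\Rightarrow(2)$, the case $p=q$ is due to \cite{CauliEtAl2019}. When $p\neq q$ and $S=\bigl(\begin{smallmatrix}A & B\\ 0 & A^{-T}\end{smallmatrix}\bigr)$ is upper block triangular (so that $A^{-1}B$ is symmetric), I would use the factorisation
\[
S=\begin{pmatrix}A & 0\\ 0 & A^{-T}\end{pmatrix}\begin{pmatrix}I & A^{-1}B\\ 0 & I\end{pmatrix}
\]
to present $\widehat{S}$ as the composition of a dilation and, up to a sign character of the metaplectic cover, a Fourier multiplier by a quadratic chirp; both preserve every $\mathrm{M}^{p,q}(\R^d)$ by standard STFT identities. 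Conceptually, the intertwining $|V_g(\widehat{S}f)(z)|=|V_{\widehat{S}^{-1}g}f(S^{-1}z)|$ reduces boundedness to an $L^{p,q}(\R^{2d})$-bound for the pullback by $S^{-1}$; when the $C$-block vanishes, $S^{-1}$ preserves every frequency fibre $\{\omega=\mathrm{const}\}$, so the inner $L^p_x$ and outer $L^q_\omega$ norms only undergo an affine rescaling.

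For the necessity direction, which is the main difficulty, I assume $p\neq q$ and that the lower-left block $C$ of $S$ is nonzero, and aim to produce $f\in\mathrm{M}^{p,q}(\R^d)$ with $\widehat{S}f\notin\mathrm{M}^{p,q}(\R^d)$. My plan is a Bruhat-type reduction. The parabolic Bruhat decomposition of $\Sp$ with respect to the Siegel parabolic (i.e.\ the upper block triangular symplectic subgroup) stratifies $\Sp$ by $k=\mathrm{rank}(C)$, and for $k\ge 1$ writes $S=U_1\,w_k\,U_2$ with $U_1,U_2$ upper block triangular symplectic and $w_k$ the involution swapping the first $k$ position and momentum coordinates; the metaplectic lift $\widehat{w_k}$ is then (up to sign) the partial Fourier transform $\mathcal{F}_k\otimes I_{d-k}$. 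By the sufficiency direction, $\widehat{U_1}$ and $\widehat{U_2}$ preserve every $\mathrm{M}^{r,s}(\R^d)$, so preservation of $\mathrm{M}^{p,q}(\R^d)$ by $\widehat{S}$ forces the same for $\mathcal{F}_k\otimes I_{d-k}$. The Fubini-type identity $\|g\otimes h\|_{\mathrm{M}^{p,q}(\R^d)}=\|g\|_{\mathrm{M}^{p,q}(\R^k)}\,\|h\|_{\mathrm{M}^{p,q}(\R^{d-k})}$ (valid for tensor-product windows) then reduces the problem to the failure of $\mathcal{F}\colon\mathrm{M}^{p,q}(\R^k)\to\mathrm{M}^{p,q}(\R^k)$ when $p\neq q$, which is classical: using $\|\mathcal{F}f\|_{\mathrm{M}^{p,q}}\asymp\|f\|_{\mathrm{M}^{q,p}}$ together with the family $f_N=\mathbf{1}_{[0,N]^k}$ (or a Schwartz regularisation), which satisfies $\|f_N\|_{\mathrm{M}^{p,q}}\asymp N^{k/p}$, the ratio $\|\mathcal{F}f_N\|_{\mathrm{M}^{p,q}}/\|f_N\|_{\mathrm{M}^{p,q}}\asymp N^{k(1/q-1/p)}$ is unbounded in $N$ whenever $p\neq q$ (replacing $f_N$ by $\mathcal{F}^{-1}f_N$ if $p<q$).

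The chief obstacle is carrying out the Bruhat-style normal form together with the correct metaplectic lift of $w_k$, and in particular the bookkeeping needed to ensure that the surrounding upper block triangular factors genuinely leave the sharp $N^{k/p}$ versus $N^{k/q}$ growth rates of the counterexample intact. Once this normal form and the tensor-product reduction are in place, the one-dimensional Fourier obstruction concludes the argument.
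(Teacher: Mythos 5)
Your approach is genuinely different from the paper's in the key direction $(1)\Rightarrow(3)$. The paper reduces via a factorization of Dopico--Johnson type, $S=\prod_{i\in J}\Pi_i\,V_Q D_L U_P$, which leaves a lower-triangular chirp $V_Q$ inside the reduced matrix; they then test boundedness directly against a one-parameter family of dilated Gaussians and compute the quadratic forms appearing in the resulting ambiguity functions (Appendices A and B), arguing by cases $Q=0$ and $Q\neq 0$. Your plan replaces this with a Bruhat decomposition $S=U_1\,w_k\,U_2$ with respect to the Siegel parabolic, which is cleaner: it fully conjugates the problem to the partial swap $w_k$ (equivalently, the partial Fourier transform $\mathcal{F}_k\otimes I_{d-k}$), after which the tensor-product factorisation of the $\mathrm{M}^{p,q}$-norm with tensor-product windows reduces the obstruction to the one-dimensional Fourier transform. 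This is more conceptual, avoids tracking the chirp $V_Q$, and buys a shorter route to the one-dimensional statement. The trade-off is that it relies on the Bruhat stratification of $\Sp$ by $k=\mathrm{rank}(C)$ and the tensor-product lemma, whereas the paper's proof is self-contained and elementary (at the cost of the quadratic-form bookkeeping).

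One step in your proposal is, however, incorrect as stated: the asserted equivalence $\norm{\mathcal{F}f}_{\mathrm{M}^{p,q}}\asymp\norm{f}_{\mathrm{M}^{q,p}}$ does not hold for general $f$. What the commutative diagram gives is $\lvert V_g(\mathcal{F}f)(x,\omega)\rvert = \lvert V_{\mathcal{F}^{-1}g}f(-\omega,x)\rvert$, so $\norm{\mathcal{F}f}_{\mathrm{M}^{p,q}}$ is, up to equivalence, the norm of $V_{\mathcal{F}^{-1}g}f$ with the inner $L^p$ taken over the \emph{frequency} variable and the outer $L^q$ over the \emph{time} variable. This is the Wiener amalgam norm $\norm{f}_{W(\mathcal{F}L^p,L^q)}$, not $\norm{f}_{\mathrm{M}^{q,p}}$; the two coincide only on functions with (essentially) separable short-time Fourier transform. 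The error is harmless for your program precisely because the concrete test family is of that type: for a dilated Gaussian $g_\sigma(t)=e^{-\pi|t|^2/\sigma^2}$ with a Gaussian window, the STFT is separable, and one computes directly $\norm{g_\sigma}_{\mathrm{M}^{p,q}}\asymp\sigma^{1/p}$ and $\norm{\widehat{g_\sigma}}_{\mathrm{M}^{p,q}}\asymp\sigma^{1/q}$ as $\sigma\to\infty$ (and dually as $\sigma\to 0$), which yields the unbounded ratio $\sigma^{1/q-1/p}$ without invoking the false general identity. The same holds for the (regularised) characteristic-function family $f_N$ you propose. I would therefore suggest replacing the appeal to $\norm{\mathcal{F}f}_{\mathrm{M}^{p,q}}\asymp\norm{f}_{\mathrm{M}^{q,p}}$ by the direct computation for the separable test family, or, even more simply, by observing that $\mathcal{F}$ is an isomorphism onto $W(\mathcal{F}L^p,L^q)$ and that the latter is a proper subspace or superspace of $\mathrm{M}^{p,q}$ when $p\neq q$. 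With this correction your argument is complete and gives a valid alternative proof.
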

This can be lifted to weighted modulation spaces if the metaplectic operator is compatible with the weight function (see \thref{thm:main_weighted}).
\begin{theorem*}
Let $m$ be a moderate polynomially bounded weight, $p,q\in[1,\infty]$. If $\widehat{S}\in \Mp$ with projection $\pi^{\tiny Mp}(\widehat{S})=S$ satisfies $m\asymp m\circ S^{-1}$, then $\widehat{S}$ is a bounded operator from $\MSpace{p,q}{} $ to $\MSpace{p,q}{} $ if and only if it is a bounded operator from $\MSpace{p,q}{m} $ to $\MSpace{p,q}{m} $.
\end{theorem*}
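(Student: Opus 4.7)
The plan is to use the symplectic covariance of the STFT to recast both boundedness statements as change-of-variable inequalities on $L^{p,q}(\R^{2d})$, and to observe that the weight compatibility $m\asymp m\circ S^{-1}$ is precisely what makes these inequalities insensitive to the weight.

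The key input is the symplectic covariance
\[
    |V_{\widehat{S}g}(\widehat{S}f)(z)| = |V_g f(S^{-1}z)|, \qquad z \in \R^{2d},
\]
which holds for every Schwartz window $g$, every $f \in L^2(\R^d)$, and every $\widehat{S}\in\Mp$ with projection $S$. Since $\widehat{S}g$ is again in $\mathcal{S}(\R^d)$ and thus a valid window, the equivalence of modulation space norms under change of window gives, for $f\in\mathcal{S}(\R^d)$,
\[
    \|\widehat{S}f\|_{M^{p,q}_m} \asymp \|V_g f\circ S^{-1}\|_{L^{p,q}_m}.
\]
Applying $m\asymp m\circ S^{-1}$ pointwise, this becomes
\[
    \|\widehat{S}f\|_{M^{p,q}_m} \asymp \|(m\cdot V_g f)\circ S^{-1}\|_{L^{p,q}} = \|T_S(m\cdot V_g f)\|_{L^{p,q}},
\]
where $T_S : F \mapsto F\circ S^{-1}$, while $\|f\|_{M^{p,q}_m} = \|m\cdot V_g f\|_{L^{p,q}}$. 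Thus boundedness of $\widehat{S}$ on $M^{p,q}_m$ is equivalent to $T_S$ being bounded on the restricted class $\{m\cdot V_g f : f\in M^{p,q}_m\}\subset L^{p,q}$; the analogous reduction with $m\equiv 1$ expresses boundedness of $\widehat{S}$ on $M^{p,q}$ as boundedness of $T_S$ on $V_g(M^{p,q})\subset L^{p,q}$.

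The main task --- and the principal obstacle --- is to conclude that these two restricted boundedness properties of $T_S$ are equivalent, which reduces to a structural statement about $S$ and $p,q$ independent of the weight. The cleanest route is to invoke \thref{thm:main_unweighted}: boundedness of $\widehat{S}$ on $M^{p,q}$ is equivalent to $p=q$ or $S$ being upper block triangular, and in either of these two cases a direct Fubini computation shows that $T_S$ extends to a bounded operator on all of $L^{p,q}(\R^{2d})$; consequently $T_S$ acts boundedly on \emph{any} subspace, weighted or not, and in particular on $\{m\cdot V_g f\}$. The converse direction is obtained by running the same chain of equivalences in reverse, the weight compatibility making each step reversible up to constants. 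A small technical nuisance is that density of $\mathcal{S}(\R^d)$ in $M^{p,q}_m$ fails at the endpoints $p=\infty$ or $q=\infty$; there one concludes via the duality $M^{p,q}_m = (M^{p',q'}_{1/m})^*$ and a standard weak-$*$ continuity argument applied to $\widehat{S}$.
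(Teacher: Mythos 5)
Your reduction via symplectic covariance and the weight equivalence $m\asymp m\circ S^{-1}$ is sound, and the direction ``bounded on $\MSpace{p,q}{}$ implies bounded on $\MSpace{p,q}{m}$'' goes through exactly as you describe: invoke \thref{thm:main_unweighted} to obtain boundedness of $\mathcal{D}_S$ on all of $L^{p,q}(\R^{2d})$ (via \thref{thm:p=q_unweighted} and \thref{thm:pq_bounded_unweighted}), then restrict to $\{m\cdot V_g f : f\in\MSpace{p,q}{m}\}$. This parallels the paper's use of \thref{cor:equiv_weight}.

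The converse --- $\widehat{S}$ bounded on $\MSpace{p,q}{m}$ implies bounded on $\MSpace{p,q}{}$ --- has a genuine gap. You assert the chain ``reverses,'' but the two subspaces of $L^{p,q}(\R^{2d})$ involved, namely $V_g(\MSpace{p,q}{})$ and $\{m\cdot V_g f : f\in\MSpace{p,q}{m}\}$, are different, and neither contains the other: the STFT range is not stable under pointwise multiplication by $1/m$, so from $F=V_g h$ with $h\in\MSpace{p,q}{}$ you cannot produce $f\in\MSpace{p,q}{m}$ satisfying $m\cdot V_g f = F$ pointwise. Hence boundedness of $\mathcal{D}_S$ on the second subspace does not transfer to the first, and the chain does not reverse without further input. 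This is precisely the difficulty that the paper's proof of \thref{thm:main_weighted} is designed to overcome: it invokes the Gröchenig--Toft lifting theorem (Theorem~\ref{thm:lifting}), which realizes the isomorphism $\MSpace{p,q}{}\cong\MSpace{p,q}{m}$ as a Toeplitz operator whose ambiguity-side action is twisted convolution with $\AF(g,g)$, and then exploits that twisted convolution commutes with $\mathcal{D}_S$ (a consequence of $S$ preserving the symplectic form) to carry the bound from $\MSpace{p,q}{m}$ back to $\MSpace{p,q}{}$. Without such a lifting device --- or, alternatively, a weighted redo of the Gaussian blow-up construction in the proof of \thref{thm:classification}, which would itself require new estimates --- your converse direction remains unproven. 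The closing remark about density and weak-$*$ continuity at the endpoints is not needed for the paper's argument and does not repair this gap.
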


\subsection{Outline}
The paper is structured as follows. The \hyperref[sec:notation]{second section} covers all necessary notions related to modulation spaces and metaplectic operators. The \hyperref[sec:unweighted]{third section} contains the proof of the classification for unweighted modulation spaces. The \hyperref[sec:weighted]{last section} is dedicated to lifting the result from the unweighted to the weighted setting. Two appendices contain somewhat lengthy computations of certain ambiguity functions and their dilates. 

\section{Basic definitions and notation}\label{sec:notation} We begin with a short introduction to time-frequency analysis. The \textit{translation} and the \textit{modulation operator} (\textit{time shift and frequency shift}) are denoted with  $T_x f(t) = f(t-x)$ and $M_\omega f(t)=e^{2\pi i \omega\cdot t}f(t)$, respectively. We will make use of the unitary Fourier transform
\begin{equation}
\F f(\omega)=\int_{\R^{d}} f(x)e^{-2\pi i \omega\cdot x} dx,\qquad \omega\in\R^{d},
\end{equation}
pointwise defined on $L^1(\R^d)$ and continuously extended by duality on $\mathcal{S}'(\R^{d})$. 
Both types of shifts are unitary representations of the abelian group $\R^d$, so they commute within their class of shifts. However, this does not transfer to their combinations, as we obtain a non-trivial phase factor
\begin{equation}\label{eq:MT=eTM}
    M_\omega T_x = e^{2\pi i \omega \cdot x} T_x M_\omega, \qquad x,\omega\in\R^d.
\end{equation}
A recurring theme in this paper are intertwining properties. The Fourier transform can be seen as the elementary example with $\F T_x = M_{-x}\F$, $\F M_\omega = T_\omega \F$, which justifies the name \textit{frequency shift} for the modulations. 

The cross-ambiguity function associated with $f,g\in L^2(\R^d)$ is the map
\begin{equation}
\begin{split}
 \AF(f,g)(x,\omega) &=\int_{\R^d} f\left(t+\frac{x}{2}\right) \overline{g\left(t-\frac{x}{2}\right)} e^{-2\pi i \omega\cdot t}\, dt= \left\langle f, T_{x/2}M_\omega T_{x/2} g\right\rangle.
\end{split}
\end{equation}
The latter equality allows us to extend the definition of the cross-ambiguity function for any $f\in \mathcal{S}'(\R^d)$, $g\in  \mathcal{S}(\R^d)$. It corresponds to the symmetric time-frequency shift 
\begin{equation}
\rho(\lambda) = T_{x/2}M_\omega T_{x/2} = M_{\omega/2}T_x M_{\omega/2},\qquad \lambda = \left(x\, ,\,\omega\right)^t,
\end{equation} 
but each parameterization with $\tau\in[0,1]$
\begin{equation}
    \rho_\tau(x,\omega)  \coloneqq T_{\tau x} M_{\omega} T_{(1-\tau) x} = e^{2\pi i (1-2\tau)\omega\cdot x} M_{\tau \omega} T_{x} M_{(1-\tau)\omega}
\end{equation}
induces a sesqui-linear time-frequency representation
\begin{equation}\label{eq:tau-representation}
    \left\langle f, \rho_\tau (x,\omega) g \right\rangle =e^{2\pi i \tau \omega\cdot x - \pi i \omega\cdot x}\left\langle f, \rho_{1/2} (x,\omega) g \right\rangle = e^{2\pi i \tau \omega\cdot x - \pi i \omega\cdot x} \AF(f,g)(x,\omega).
\end{equation}
Taking $\tau = 1/2$ results in the symmetric time-frequency shift. As another prominent choice, the completely polarized version $\tau =0$ reproduces the \textit{short-time Fourier transform} (STFT) 
\begin{equation}
    V_g f(x,\omega ) =\int_{\R^d} f\left(t\right) \overline{g\left(t-x\right)} e^{-2\pi i \omega\cdot t}\, dt = \left\langle f, M_\omega T_{x} g\right\rangle.
\end{equation}
A further closely related transform is the \emph{cross-Wigner distribution} 
$$
\mathrm{W}(f,g)(x,\omega) = \int_{\R^d} f\left(x+\tfrac{t}{2}\right) \overline{g\left(x-\tfrac{t}{2}\right)} e^{-2\pi i \omega\cdot t}\, dt = 2^d \AF(f, g(-\dotp))(2x,2\omega),
$$
which is often studied in the context of quantum mechanics \cite{Gosson2006}.

The relation \eqref{eq:MT=eTM} does not allow for $\rho_\tau(\lambda+\nu) =\rho_\tau(\lambda)\rho_\tau(\nu)$ to hold for arbitrary $\lambda,\,\nu\in\R^{2d}$, but the appearing phase factor induces a corresponding Heisenberg group $\H_\tau$. Topologically, the Heisenberg group can be identified with $\R^d\times\R^d\times \R$. However, this does not hold on the group level, as the multiplication is non-commuting, chosen in such a way that $(\lambda, t)\mapsto e^{2\pi i t}\rho_\tau(\lambda)$ is a unitary representation on the Hilbert space $L^2(\R^d)$. This allows for methods from representation theory and gives rise to a non-trivial family of special intertwining operators, the metaplectic operators, which will later be addressed in depth. For further exploration of these parameterized representations, we refer the reader to \cite{BoggiattoEtAl2010}, \cite{BoggiattoEtAl2010_2} and \cite{Sandikci2020}.

Throughout this paper, $\norm{\dotp}_{p}\coloneqq\norm{\dotp}_{L^p(\R^d)}$, denotes the $p$-norm on $L^p(\R^d)$, $p\in[1,\infty]$. We consider modulation spaces associated to mixed-norm weighted $L^p$-spaces, more formally defined as the spaces $L^{p,q}_m(\R^{2d})$ of measurable functions $f:\R^d\times \R^d \to \C$ with the norm
\begin{equation*}
\norm{f}_{p,q,m}\coloneqq \norm{\omega\mapsto\norm{f(\dotp, \omega)\, m(\dotp, \omega)}_{p}}_{q}.
\end{equation*}
Here $m:\R^{2d}\to (0,\infty)$ is a weight that is \textit{$v$-moderate},  i.e., there is a $C>0$ with
\begin{equation*}
    m(x + y)\leq C\, v(x)\cdot m(y),\qquad x,y,\in\R^{2d}
\end{equation*}
for some weight $v$. Throughout this paper we reserve the term \textit{moderate weight} to even weights whose controlling weight can be chosen as $v(x) = (1+\|x\|)^N$, for a suitable exponent $N>0$. Note that this implies  $m(x) \le C' (1+\|x\|)^N$, i.e., a moderate weight $m$ is itself polynomially bounded. The control weight $v$ is  \textit{submultiplicative}, which amounts to the estimate 
\begin{equation}
       v(x + y)\leq C\, v(x)\cdot v(y),\qquad x,y,\in\R^{2d}.
\end{equation}
 As a consequence of submultiplicativity, $L^{p,q}_m(\R^{2d})$ turns out to be invariant under translations \cite[Proposition 11.1.2.]{Groechenig2001}, which is fundamental for the well-definedness of the associated modulation spaces.

Mixed-norm Lebesgue spaces were systematically explored in \cite{BenedekPanzone1961}. Most properties, such as completeness, 
 duality and interpolation relations hold in an analogous fashion to the versions of the standard Lebesgue spaces. 
The modulation spaces associated to $L^{p,q}_m(\R^{2d})$ are then given by
\begin{equation*}
    \MSpace{p,q}{m} = \left\lbrace f\in \mathcal{S}'(\R^d)\mid \AF(f,g)\in L^{p,q}_m(\R^{2d})\right\rbrace,
\end{equation*}
where $g\in\mathcal{S}(\R^d) \setminus \{0 \}$ is fixed. It is well-known that $\MSpace{p,q}{m}$ is independent of $g$, and that the associated norm 
\begin{equation*}
    \norm{f}_{\MSpace{p,q}{m}}\coloneqq \norm{\AF(f,g)}_{p,q,m}
\end{equation*}
is independent up to equivalence. Traditionally, the modulation spaces are defined using the short-time Fourier transform, but the phase factor in \eqref{eq:tau-representation} has no impact on the definition or the induced norms. Due to the algebraic relation to the ambiguity function, the cross-Wigner distribution can be used as well. Using the ambiguity function instead of the windowed Fourier transform will make the interaction with the metaplectic action somewhat easier, specifically in the connection with twisted convolution in the proof of \thref{thm:main_weighted}. In the spirit of the STFT, we will refer to $g\in\mathcal{S}(\R^d)$ as \textit{the window function} of $\AF(\dotp,g)$. 

Modulation spaces are Banach spaces, with natural, continuous embeddings as closed subspaces of $L^{p,q}_m(\R^{2d})$. The reconstruction formula is given by
\begin{equation}
   \left\langle g,\gamma\right\rangle f  = \AF(\dotp, g)^*\, \AF(f,\gamma),
\end{equation}
where the adjoint acts (in the weak sense) as
\begin{equation}
   \AF(\dotp,g)^*F  = \int_{\R^{2d}} F(x,\omega)\, T_{x/2}M_{\omega}T_{x/2}g\ d(x,\omega).
\end{equation}
The following result was first noted in \cite{Feichtinger1983}. See also \cite[Section 2.3]{Cordero_Rodino}. 
\begin{proposition}
\thlabel{prop:inclusions}
Let $p,q, r,s \in[1,\infty]$. The following relations hold:
\begin{enumerate}[(a)]
\item $\MSpace{p,q}{m}\subseteq \MSpace{r,s}{m}$ if and only if $p\leq r$ and $q\leq s$.
\item  $(\MSpace{p,q}{m} )' =\MSpace{p',q'}{1/m}$, if $1\leq p,q<\infty$, with 
\begin{equation}
    \abs{\left\langle f,g\right\rangle} \lesssim \norm{f}_{\MSpace{p,q}{m}}\norm{g}_{\MSpace{p',q'}{1/m}},
\end{equation}
where $p'$ and $q'$ are the H\"older conjugates of $p$ and $q$, respectively.
\item If the weight function is polynomially bounded, 
\begin{equation}\label{eq:inclusions}
\mathcal{S}(\R^{d})\subseteq \MSpace{1}{m}\subseteq \MSpace{p,q}{m}\subseteq \MSpace{\infty}{m}\subseteq \mathcal{S}'(\R^d).
\end{equation}
\end{enumerate}
\end{proposition}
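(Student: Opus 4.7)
The plan is to treat the three claims in order, leaning on two recurring tools: the reproducing identity for the ambiguity function (which turns everything into a convolution-type estimate) and the known duality of mixed-norm Lebesgue spaces. Throughout, fix auxiliary Schwartz windows $g_1, g_2 \in \mathcal{S}(\R^d)$ with $\langle g_2, g_1\rangle = 1$, so that the norm $\|\AF(\cdot, g_1)\|_{p,q,m}$ is equivalent to $\|\AF(\cdot, g_2)\|_{p,q,m}$ on each space involved.

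For the "if" part of (a), I would use a reproducing formula of the form $\AF(f, g_1)(z) = \int \AF(f, g_2)(w)\,K(w, z)\,dw$ with Schwartz kernel $K$ built from $\AF(g_1, g_2)$ and the Heisenberg cocycle, then pass to absolute values to get a domination by an ordinary convolution $|\AF(f, g_2)| * |\AF(g_1, g_2)|$ up to a phase. Young's inequality for weighted mixed-norm spaces, together with $\AF(g_1, g_2) \in \mathcal{S}(\R^{2d}) \subseteq L^1_v(\R^{2d})$ for any polynomial moderating weight $v$, yields $\|f\|_{\MSpace{r,s}{m}} \lesssim \|f\|_{\MSpace{p,q}{m}}$ whenever $p \leq r$ and $q \leq s$. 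For the "only if" direction, I would construct explicit counterexamples: if $p > r$, take $f = \sum_{k=1}^{N} c_k T_{x_k} g_0$ for a Gaussian $g_0$ and widely separated points $x_k$, so that $|\AF(f, g_0)|$ decomposes into essentially disjoint translates and the ratio $\|f\|_{\MSpace{r,s}{m}} / \|f\|_{\MSpace{p,q}{m}}$ can be driven to infinity by a suitable choice of $(c_k)$; a mirror construction with modulations handles $q > s$. Localizing the counterexample makes the weight $m$ essentially constant on its support, neutralizing its influence.

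For (b), exploit that the map $V \colon \MSpace{p,q}{m} \to L^{p,q}_m(\R^{2d})$, $f \mapsto \AF(f, g_2)$, is an isometric embedding with a bounded left inverse $V^*$ given by the adjoint appearing in the reconstruction formula. This realizes $\MSpace{p,q}{m}$ as a complemented closed subspace of $L^{p,q}_m$, hence its dual is identified with $(L^{p,q}_m)' / \ker(V V^*)^* \cong \operatorname{range}(V V^*)^*$ on the dual side; using the known duality $(L^{p,q}_m)' = L^{p',q'}_{1/m}$, a direct verification shows this image is precisely $\MSpace{p',q'}{1/m}$ with the expected sesquilinear pairing, and the norm estimate follows from Hölder's inequality for mixed-norm spaces. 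The restriction $p, q < \infty$ is needed to guarantee $L^{p',q'}_{1/m} = (L^{p,q}_m)'$.

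For (c), the starting observation is that $\AF(f, g) \in \mathcal{S}(\R^{2d})$ whenever $f, g \in \mathcal{S}(\R^d)$; since $m$ is polynomially bounded, any Schwartz function on $\R^{2d}$ lies in $L^{1}_m(\R^{2d})$, yielding $\mathcal{S}(\R^d) \subseteq \MSpace{1}{m}$. The middle chain of inclusions is exactly part (a) applied with $p=q=1$ on the left and $p=q=\infty$ on the right. For $\MSpace{\infty}{m} \subseteq \mathcal{S}'(\R^d)$, define the action of $f \in \MSpace{\infty}{m}$ on $h \in \mathcal{S}(\R^d)$ via the weak reconstruction $\langle f, h\rangle = \int \AF(f, g_2)(z)\,\overline{\AF(h, g_1)(z)}\,dz$ and bound by $\|\AF(f, g_2)\|_{L^\infty_m}\,\|\AF(h, g_1)\|_{L^1_{1/m}}$, where the second factor is controlled by Schwartz seminorms of $h$.

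The main obstacle I expect is the "only if" half of (a): one must design one-parameter families of test signals that exploit only a failure in $p$ or only a failure in $q$, with uniform control on both mixed-norm indices simultaneously, and with care that the polynomially bounded weight $m$ does not conceal the desired blow-up. The duality in (b) is conceptually routine once $\MSpace{p,q}{m}$ is identified as a complemented subspace, but the bookkeeping around the phase factor in $\AF$ versus the STFT requires attention so that the pairing in the statement comes out without extraneous constants.
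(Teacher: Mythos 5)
The paper does not supply a proof of this proposition: it is stated as standard background, with an attribution to Feichtinger's original 1983 paper and a pointer to Cordero--Rodino, Section~2.3 (and, for general background, to Gr\"ochenig's book). There is therefore no paper-internal argument to compare against, and your proposal should be judged against the standard textbook proofs.

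Measured against that benchmark, your outline is essentially the canonical route and is sound. For (a), dominating $\AF(f,g_1)$ by an ordinary convolution $|\AF(f,g_2)| * |\AF(g_1,g_2)|$ and invoking weighted mixed-norm Young with a Schwartz kernel in $L^1_v$ is exactly the correspondence/reproducing-kernel argument of Gr\"ochenig (Prop.~11.3.1--11.3.2 and Thm.~12.2.2 in his book); the ``only if'' part via well-separated translates and modulations of a Gaussian, driving the coefficient-sequence norms apart, is also the standard construction, and your remark that one should localize so the weight is essentially constant is the right precaution. For (b), identifying $\MSpace{p,q}{m}$ as a complemented subspace of $L^{p,q}_m$ via the analysis/synthesis pair and then transporting the mixed-norm Lebesgue duality $(L^{p,q}_m)' = L^{p',q'}_{1/m}$ is the same mechanism Gr\"ochenig uses (Thm.~11.3.6); the H\"older estimate you state falls out immediately. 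For (c), the three links are each handled by the right tool: $\AF(\mathcal S,\mathcal S)\subseteq\mathcal S$ plus polynomial boundedness of $m$ for the left embedding, part (a) for the middle, and the dual pairing bound $\|\AF(f,g_2)\|_{L^\infty_m}\,\|\AF(h,g_1)\|_{L^1_{1/m}}$ for the right, noting that $1/m$ grows at most polynomially because $m$ is $v$-moderate. The only thing worth tightening is in (a): the Young step gives you $\MSpace{p,q}{m}\hookrightarrow\MSpace{r,s}{m}$ with bounded inclusion, which is stronger than set-inclusion and is what you actually want; and in the ``only if'' half, separating the role of $p$ versus $q$ does require (as you anticipate) two genuinely independent one-parameter families, one exploiting translations and one exploiting modulations, so that the failure in the first index is not masked by the second. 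Overall, correct and aligned with the cited sources.
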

For a more detailed overview of modulation spaces, we refer to \cite{Cordero_Rodino, Gosson2011, Grafakos2014, Groechenig2001}. 

\subsection{The symplectic group and the metaplectic group}
Both time and frequency shifts are representations of the abelian group $\R^d$, so they commute within their class, but symmetric time-frequency shifts do not satisfy $\rho(\lambda)\rho(\nu) = \rho(\lambda+\nu)$ for arbitrary $\lambda, \nu\in\R^{2d}$. 

In order to describe the symplectic group, we first introduce the \textit{standard symplectic form} on $\mathbb{R}^{2d}$, which is the bilinear map $[\cdot, \cdot ] : \mathbb{R}^{2d} \times \mathbb{R}^{2d}$ given via
\[
[(x,\omega),(x',\omega')] = x' \cdot \omega - x \cdot \omega'~,
\] where $x,x',\omega,\omega' \in \mathbb{R}^d$. The standard symplectic matrix 
\begin{equation*}
    \J = \begin{pmatrix}
    0 & I_d \\
    -I_d & 0
    \end{pmatrix},
\end{equation*}
with $I_d$ denoting the $d$-dimensional identity matrix,
allows to express this bilinear form as $[\lambda,\lambda'] = \lambda \cdot \J \lambda'$. Now the symplectic group $\Sp\leq \mathrm{SL}(2d,\R)$ consists of the matrices $S\in \mathrm{GL}(2d,\R)$ preserving the commutator relation, i.e., they satisfy  
\begin{equation}\label{eq:S_defining}
         [\rho(S\lambda),\rho(S\nu)]_{-} = [\rho(\lambda),\rho(\nu)]_{-},\qquad \lambda, \nu \in\R^{2d},
\end{equation}
where $[\cdot,\cdot]_{-}$ denotes the commutator of linear operators.
One easily verifies that this condition is equivalent to the equation $S^T \J S = \J$. These characterizations readily imply that the symplectic group is indeed a closed subgroup of the full matrix group $\mathrm{GL}(2d,\R)$.

Due to the block structure of $\J$, symplectic matrices are often represented in block form as well. The following examples of symplectic matrices will turn out to be important building blocks of the symplectic group:
 let $P,\,Q\in\R^{d\times d}$ be symmetric and $L\in \mathrm{GL}(d,\R)$ be invertible. The following matrices are in $\Sp$:
\begin{equation}
U_P=\begin{pmatrix}
I & P \\
0 & I
\end{pmatrix},\quad 
V_Q=\begin{pmatrix}
I & 0 \\
Q & I
\end{pmatrix},\quad
D_L=\begin{pmatrix}
L & 0 \\
0 & L^{-T}
\end{pmatrix}.
\end{equation}
The first $d$ elementary symplectic quasi-permutation matrices $\Pi_{i}\in \Sp$,$\ 1\leq i\leq d$, are given by
\begin{equation}
\Pi_i\ e_j=\begin{cases}
e_j\ ,& \;1\leq j\leq 2d, j\neq i, j\neq i+d,\\
-e_{i+d}\ , &\; j=i, \\
e_i\ , &\; j=i+d.
\end{cases}
\end{equation}
We observe that each $\Pi_i$ acts via a combination of a permutation (in fact, a transposition) and a sign change. 
Notice that we can represent the standard symplectic matrix as
\begin{equation}
    \J = \prod\limits_{i=1}^{d}\Pi_i.
\end{equation}
For all $1\leq i\leq d$ holds
\begin{equation}
\Pi_{i+d}\coloneqq\Pi_{i}^T = \Pi_i^{-1}\;, \quad \Pi_{i+d}\ e_j =\begin{cases}
e_j\ ,& \;1\leq j\leq 2d, j\neq i, j\neq i+d,\\
e_{i+d}\ , &\; j=i, \\
-e_i\ , &\; j=i+d.
\end{cases} 
\end{equation}
Moreover, all $\Pi_i$ commute, i.e.  $\Pi_i \Pi_j = \Pi_j \Pi_i$ for all $1\leq i,j\leq 2d$.
These symplectic matrices are significant not only due to the clean definition but also for their generating properties.
 \begin{proposition}[Factorization in $\Sp$, \cite{DopicoJohnson2009}]\thlabel{prop:decomposition}
 For all $S\in \Sp $ there is an index set $J\subseteq \{1,...,2d\}$, symmetric matrices $P,Q\in\R^{d\times d}$ and an invertible matrix $L\in \mathrm{GL}(d,\R)$ with
\begin{equation}
S=\prod\limits_{i\in J}\Pi_i\; V_Q D_L U_P.
\end{equation}
 \end{proposition}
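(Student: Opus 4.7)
The plan is a two-stage reduction: first, handle the ``generic'' case in which the upper-left $d\times d$ block of $S$ is invertible; then reduce the general case to this one by left-multiplying $S$ by a suitable product of the quasi-permutations $\Pi_i$.

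Writing $S=\begin{pmatrix}A & B\\ C & D\end{pmatrix}$ in block form, the two symplectic identities $S^T\J S=\J$ and $S\J S^T=\J$ give the block relations $A^TC=C^TA$, $AB^T=BA^T$, and $A^TD-C^TB=I$ (among others). In the generic case $A\in\mathrm{GL}(d,\R)$, I would set $L:=A$, $P:=A^{-1}B$, $Q:=CA^{-1}$; symmetry of $P$ then follows from $AB^T=BA^T$ and that of $Q$ from $A^TC=C^TA$. A direct block computation yields
\[
V_Q D_L U_P=\begin{pmatrix}L & LP\\ QL & QLP+L^{-T}\end{pmatrix}=\begin{pmatrix}A & B\\ C & CA^{-1}B+A^{-T}\end{pmatrix},
\]
and the equality $D=CA^{-1}B+A^{-T}$ follows by multiplying $A^TD-C^TB=I$ on the left by $A^{-T}$ and using $A^{-T}C^T=CA^{-1}$ (equivalent to the symmetry of $A^TC$). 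Thus $S=V_QD_LU_P$, i.e., the factorization holds with $J=\emptyset$.

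For the general case I would exhibit a subset $J\subseteq\{1,\ldots,2d\}$---containing at most one of $\{i,i+d\}$ for each $i\in\{1,\ldots,d\}$---such that $\widetilde{S}:=\bigl(\prod_{i\in J}\Pi_i\bigr)^{-1}S$ has an invertible upper-left block. Since $\Pi_i^{-1}=\Pi_{i+d}$ acts on the left by exchanging rows $i$ and $i+d$ (with a sign depending on whether one uses $\Pi_i$ or $\Pi_{i+d}$), the upper-left block of $\widetilde{S}$ is the $d\times d$ matrix whose $i$-th row is either $\pm A_i$ or $\pm C_i$, the choice being dictated by $J$. Given such a $J$, stage one applied to the (still symplectic) matrix $\widetilde{S}$ yields $\widetilde S=V_QD_LU_P$, and therefore $S=\bigl(\prod_{i\in J}\Pi_i\bigr)V_QD_LU_P$.

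The main obstacle is showing that such a $J$ always exists. The decisive structural input is that $\binom{A}{C}$ is a $2d\times d$ matrix of rank $d$ (since $S$ is invertible) whose column span is a \emph{Lagrangian} subspace of $(\R^{2d},[\cdot,\cdot])$, because $A^TC$ is symmetric. A standard fact about the Lagrangian Grassmannian $\Lambda(d)$---most naturally proved by induction on $d$ via pivoting on one coordinate pair $(e_i,e_{i+d})$ at a time---asserts that every Lagrangian is transverse to at least one of the $2^d$ coordinate Lagrangians $L_{J_0}=\mathrm{span}\bigl(\{e_i: i\notin J_0\}\cup\{e_{i+d}: i\in J_0\}\bigr)$, $J_0\subseteq\{1,\ldots,d\}$. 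Transversality to $L_{J_0}$ is precisely the invertibility of the row selection described above, and the signs acquired during the row exchanges then select between $\Pi_i$ and $\Pi_{i+d}$ in the final product, completing the construction.
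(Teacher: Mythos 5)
The paper does not prove this proposition; it is stated with an attribution to \cite{DopicoJohnson2009} and taken as given, so there is no proof in the paper to compare against. Your argument is nonetheless correct and essentially reconstructs the route taken in the cited reference. The generic case, with $A$ invertible, is handled exactly right: the block identity $V_Q D_L U_P=\begin{pmatrix}L & LP\\ QL & QLP+L^{-T}\end{pmatrix}$ forces $L=A$, $P=A^{-1}B$, $Q=CA^{-1}$; the symmetry of $P$ and $Q$ follows from $AB^T=BA^T$ and $A^TC=C^TA$; and the remaining identity $D=CA^{-1}B+A^{-T}$ is a rearrangement of $A^TD-C^TB=I$. The reduction to this case by left-multiplying with the $\Pi_i$'s is also fine: each $\Pi_i$ is symplectic, so the reduced matrix stays in $\Sp$, and the signs produced by the row swaps are immaterial for invertibility of the new upper-left block and are simply absorbed into $L$, $Q$, $P$ afterwards. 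The one ingredient you invoke without full proof is the Lagrangian transversality lemma, namely that every Lagrangian subspace of $(\R^{2d},[\cdot,\cdot])$ is transverse to at least one of the $2^d$ coordinate Lagrangians. This is indeed standard, and it is also the crux of the construction in \cite{DopicoJohnson2009}, so your inductive sketch is the right strategy; but it is worth pinpointing where the Lagrangian hypothesis enters, since $\mathrm{rank}\begin{pmatrix}A\\C\end{pmatrix}=d$ alone does not suffice. For instance $A=\left(\begin{smallmatrix}0&1\\0&0\end{smallmatrix}\right)$, $C=\left(\begin{smallmatrix}1&0\\0&0\end{smallmatrix}\right)$ has full column rank but no admissible row selection is invertible (here $A^TC$ is not symmetric). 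Concretely, after pivoting on a nonzero entry of the first column within a coordinate pair $\{i,i+d\}$ and clearing that row by column operations, the isotropy of the column span paired against $e_{i+d}$ is what guarantees that the remaining columns stay linearly independent when restricted to the complementary $2(d-1)$ coordinates, so that the induction closes; spelling this out would make the proof self-contained.
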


 The symplectic group can also be understood in terms of the Heisenberg group $\H$. More precisely, it can be equivalently defined as the group of linear automorphisms $\mathcal{A}$ on $\left(\R^{2d}, +\right)$ which extend to an automorphism $(\lambda,t)\mapsto (\mathcal{A}t,t)$ of the Heisenberg group \cite{Folland1989}.
 The property \eqref{eq:S_defining} and methods from representation theory, in particular Stone-von Neumann's theorem, imply the existence of unitary operators $\h{S}$
 with
\begin{equation}
    \rho(S\lambda) = \h{S}\rho(\lambda)\h{S}^{-1},
\end{equation}
that is, $\widehat{S}$ arises as an intertwining operator. They generate a group $G$ with the composition as the group multiplication. However, any multiple $\tau\widehat{S}$, $\tau\in\mathbb{T}$, is an intertwining operator, implying a high redundancy of the group action of $G$ on $L^2(\R^d)$. Nevertheless, there exists a subgroup of $G$ such that for all $S\in\Sp$ the said subgroup contains exactly two corresponding operators $\widehat{S}$.
The group is called the \textit{metaplectic group} and is denoted by $\Mp$. It is an explicit realization of the double cover of $\Sp$ and the projection
\begin{equation*}
    \pi^{\mathrm{Mp}}:\Mp\to\Sp
\end{equation*}
is a group homomorphism, with kernel $\mathrm{ker}(\pi^{\mathrm{Mp}}) =\left\lbrace \mathcal{\mathrm{id}, -\mathrm{id}}\right\rbrace$. The metaplectic operators are automorphisms on the Schwartz space, and can therefore be extended by duality to $\mathcal{S}'(\R^d)$ \cite[Proposition 4.27]{Folland1989}.
Standard examples of metaplectic operators are the unitary dilations $\mathcal{D}_L f(t) = \abs{\det{L}}^{-1} f(L^{-1}t)$, the linear chirps $\mathcal{V}_Q f(t) = e^{\pi i t\cdot Qt}f(t)$ and the Fourier transform. Indeed, 
\begin{equation}\begin{split}
    \rho(\J\lambda)&=\F\, \rho(\lambda)\,\F^{-1}, \\
    \rho(V_Q\lambda)&=\mathcal{V}_Q\,\rho(\lambda)\, \mathcal{V}_Q^{-1}, \\
    \rho(D_L\lambda)&=\mathcal{D}_L\,\rho(\lambda)\, \mathcal{D}_L^{-1}, \\
    \rho(U_P\lambda)&=\left(\F\, \mathcal{V}_{-P}\,\F^{-1}\right)\rho(\lambda) \left(\F\, \mathcal{V}_{-P}\,\F^{-1}\right)^{-1}.
\end{split}\end{equation}

 A slightly more explicit description is based on quadratic Fourier transforms. Detailed constructions can be found in \cite[Chapter 9]{Groechenig2001}, \cite[Chapter 4]{Folland1989} and \cite[Chapter 7]{Gosson2011}.
The connection between the ambiguity function and the metaplectic operators is best summarized by the symplectic covariance of the ambiguity function:
\begin{equation}\label{eq:symplectic_covariance}
\AF(f,g)\circ S^{-1}\ (\lambda) =  \AF\left(\h{S}f,\h{S} g \right)(\lambda).
\end{equation}
Rewriting this equation slightly as 
\[
A(\h{S}f,g) (\lambda) = A(f, \h{S}^{-1}(g)) \circ S^{-1}(\lambda)
\]
shows that the effect of $\h{S}$ on the ambiguity function amounts to exchanging the window (which induces a well-understood equivalence of modulation space norms, since the window class is invariant under the metaplectic action), and the composition operator obtained by letting $S^{-1}$ act on the input variables of the ambiguity function. Hence the focus of the following section will be on the latter. 

\section{The metaplectic action on \texorpdfstring{$L^{p,q}(\R^{2d})$}{Lebesgue spaces } and on \texorpdfstring{$\MSpace{p,q}{}$}{modulation spaces}}\label{sec:unweighted}
In this section we fully characterize the metaplectic operators leaving invariant the unweighted modulation spaces $\MSpace{p,q}{}$. This problem turns out to be rather closely related to a purely measure-theoretic question, namely invariance of the associated mixed unweighted spaces $L^{p,q}(\mathbb{R}^{2d})$ under symplectic changes of variable. Before we explain this aspect of our strategy in more detail, we introduce some notation concerning dilation operators and their domains and make some fundamental observations concerning the boundedness of such operators. 

\begin{proposition}\thlabel{prop:closed_operator} 
Let $S\in\R^{2d\times 2d}$ be an invertible matrix and $V\subseteq L^{p,q}(\R^{2d})$ a closed subspace. Assume
\begin{equation}
\mathcal{D}_S:V \to  L^{p,q}(\R^{2d}),\quad f\mapsto f\circ S^{-1}
\end{equation}
is well-defined. Then $\mathcal{D}_S$ is bounded.
\end{proposition}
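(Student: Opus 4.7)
The plan is to invoke the closed graph theorem for Banach spaces. Since $V$ is a closed subspace of the Banach space $L^{p,q}(\R^{2d})$, it is itself a Banach space under the inherited norm, and $\mathcal{D}_S$ maps into a Banach space. So boundedness is equivalent to closedness of the graph. The task reduces to showing: if $f_n \to f$ in $V$ and $f_n \circ S^{-1} \to g$ in $L^{p,q}(\R^{2d})$, then $g = f \circ S^{-1}$ almost everywhere.

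My first step would be to observe that, for every $1 \le p,q \le \infty$, one has a continuous inclusion $L^{p,q}(\R^{2d}) \hookrightarrow L^1_{\mathrm{loc}}(\R^{2d})$. This is a straightforward application of Hölder's inequality on any product of bounded sets, using the finiteness of the indicator functions in the dual exponents. Consequently, $f_n \to f$ and $f_n \circ S^{-1} \to g$ both hold in $L^1_{\mathrm{loc}}(\R^{2d})$ (and a fortiori in $\mathcal{D}'(\R^{2d})$).

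The second step is to transport the first convergence through the change of variables. For invertible $S$, the substitution formula yields
\begin{equation}
\int_K |f_n \circ S^{-1}(x) - f \circ S^{-1}(x)|\, dx = |\det S| \int_{S^{-1}K} |f_n(y) - f(y)|\, dy
\end{equation}
for every compact $K \subseteq \R^{2d}$. Since $S^{-1}K$ is again compact, the right-hand side tends to zero. Hence $f_n \circ S^{-1} \to f \circ S^{-1}$ in $L^1_{\mathrm{loc}}(\R^{2d})$ as well. By uniqueness of limits in $L^1_{\mathrm{loc}}$, we conclude $g = f \circ S^{-1}$ almost everywhere, which shows $g = \mathcal{D}_S f$ in $L^{p,q}(\R^{2d})$.

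I do not foresee a major obstacle; the only delicate point is making sure the argument is valid uniformly in $p,q \in [1,\infty]$, in particular the endpoint cases where one cannot fall back on a.e.\ convergence of subsequences without care. Routing everything through the robust local $L^1$ framework (which holds for all exponents by the Hölder argument above) cleanly avoids this issue, and the change-of-variables identity is elementary because $S$ is a linear invertible map.
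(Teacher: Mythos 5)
Your proof and the paper's both rest on the closed graph theorem, so the overall strategy is the same; the difference lies only in how you verify that the graph is closed. The paper argues via Riesz--Fischer: from $f_n \to f$ in $L^{p,q}$ extract a subsequence converging pointwise a.e., observe that $\mathcal{D}_S$ preserves a.e. convergence, then extract a further subsequence of $\mathcal{D}_S f_{n_k}$ converging a.e. to $g$, and conclude $g = f \circ S^{-1}$. Your route instead passes through the continuous embedding $L^{p,q}(\R^{2d}) \hookrightarrow L^1_{\mathrm{loc}}(\R^{2d})$, exploits that the linear change of variables is an isomorphism of $L^1_{\mathrm{loc}}$, and identifies the two limits by uniqueness there. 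Both are correct. Your version is arguably a bit more robust at the endpoints $p=\infty$ or $q=\infty$: the Riesz--Fischer extraction of an a.e.\ convergent subsequence in a mixed norm requires an iterated (Fubini-type) argument that the paper leaves implicit, whereas $L^1_{\mathrm{loc}}$ convergence and its uniqueness of limits work uniformly for all $1 \le p,q \le \infty$ with no case distinction. The price is the small extra lemma that $L^{p,q} \hookrightarrow L^1_{\mathrm{loc}}$, which you correctly reduce to Hölder on bounded boxes. Either way the heart of the matter is the same: $\mathcal{D}_S$ is a closed operator between Banach spaces, hence bounded once everywhere defined.
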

\begin{proof}
$\mathcal{D}_S$ is obviously a linear operator. We will  show that it is also closed. To that end, let $(f_n, \mathcal{D}_S f_n)_{n\in\N}\subseteq V\times L^{p,q}(\R^{2d})$ be an arbitrary Cauchy sequence. Due to the completeness of $V\times L^{p,q}(\R^{2d})$, the sequence has a limit $(f,g)$. The convergence in the product space means
\begin{equation}
\norm{f_n-f}_{L^{p,q}}\to 0\qquad \text{ and } \qquad
\norm{\mathcal{D}_S f_n-g}_{L^{p,q}}\to 0.
\end{equation}
By the theorem of Riesz-Fischer, there is a subsequence $(f_{n_k})_{k\in\N}$ with $f_{n_k}(x)\to f(x)$ almost everywhere. Hence we get $\mathcal{D}_S f_{n_k}(x)\to f\circ S^{-1} (x)$ almost everywhere. On the other side, 
\begin{equation}
\norm{\mathcal{D}_S f_{n_k}-g}_{{p,q}}\to 0
\end{equation}
and we can choose a subsequence $(f_{n_{k_m}})_{m\in\N}$ with $\mathcal{D}_S f_{n_{k_m}}\to g\ $ almost everywhere. This implies $g=f\circ S^{-1}$ almost everywhere, which proves the claim. Since both $V$ and $L^{p,q}(\R^{2d})$ are Banach spaces, the closed graph theorem implies that $\mathcal{D}_S$ is bounded.
\end{proof} 
In the subsequent applications of this result, the closed subspace $V$ will be either all of $L^{p,q}(\mathbb{R}^{2d})$, or the image space $A(\cdot,g)(\MSpace{p,q}{})$ of the modulation space $\MSpace{p,q}{}$ under the ambiguity operator, with respect to a fixed nonzero window $g \in S(\R^d)$. 
Following standard terminology, we call $\mathcal{D}_S$ \textit{everywhere defined} on $V$ if 
\begin{equation}
    \dom (\mathcal{D}_S)\coloneqq\{f\in V \mid f\circ S^{-1}\in L^{p,q}(\R^{2d})\}= V.
\end{equation}
With this notation, the interplay between the symplectic action on $L^{p,q}(\R^{2d})$ and the metaplectic action on $\MSpace{p,q}{}$ can be best illustrated by the commutative diagram shown in Figure \ref{fig:comm_diag}. 

\begin{figure}[ht!] 
\[
  \begin{tikzcd}
    \MSpace{p,q}{}  \arrow{rr}{\AF(\dotp,g)} \arrow{dddd}[swap]{\widehat{S}}& & \AF(\dotp,g)(\MSpace{p,q}{} ) \arrow[rr,hookrightarrow]{}\arrow[dddd,dashed]{} && L^{p,q}(\R^{d}) \arrow{dddd}{\mathcal{D}_S} \\
    &&&&\\
    &&&&\\
    &&&&\\
    \MSpace{p,q}{}  \arrow{rr}[swap]{\AF(\dotp,\widehat{S}^{-1}g)}& & \AF(\dotp,\widehat{S}^{-1}g)(\MSpace{p,q}{} ) \arrow[rr,hookrightarrow]{} && L^{p,q}(\R^{d})  \\
  \end{tikzcd}
\]
\caption{Commutative diagram}\label{fig:comm_diag}
\end{figure}

We now state the main result of this section. 
\begin{theorem}\thlabel{thm:main_unweighted}
Let $p,q\in [1,\infty]$ and $\widehat{S}\in \Mp$ be given. The following statements are equivalent:
\begin{enumerate}[(a)]
\item $\widehat{S}:\MSpace{p,q}{} \to \MSpace{p,q}{} $ is everywhere defined.
\item $\widehat{S}:\MSpace{p,q}{} \to \MSpace{p,q}{} $ is everywhere defined and bounded.
\item One of the following conditions holds:
\begin{enumerate}[(i)]
\item $p=q$, or
\item $p\neq q$ and the projection $\pi^{\mathrm{Mp}}(\widehat{S})=S\in \Sp $ is an upper block triangular matrix.
\end{enumerate}
\end{enumerate}
\end{theorem}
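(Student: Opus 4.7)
The plan is to use the commutative diagram (Figure~\ref{fig:comm_diag}) as a bridge: the mapping properties of $\widehat{S}$ on $\MSpace{p,q}{}$ are equivalent to those of the dilation $\mathcal{D}_S$ on the subspace $V := \AF(\cdot,g)(\MSpace{p,q}{})$ of $L^{p,q}(\R^{2d})$. A key preliminary is that $V$ is \emph{closed} in $L^{p,q}(\R^{2d})$: since $\AF(\cdot,g)$ is bounded with image norm equivalent to the $\MSpace{p,q}{}$-norm, it is a Banach isomorphism onto its image, so Proposition~\thref{prop:closed_operator} applies to $\mathcal{D}_S|_{V}$. The implication $(b)\Rightarrow(a)$ is trivial, and $(a)\Rightarrow(b)$ follows at once: everywhere-definedness of $\widehat{S}$ is, via the diagram, the same as everywhere-definedness of $\mathcal{D}_S$ on $V$, and Proposition~\thref{prop:closed_operator} then forces boundedness of $\mathcal{D}_S|_V$, which the diagram transports back to boundedness of $\widehat{S}$.

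For $(c)\Rightarrow(b)$, the case $p=q$ is immediate since $L^{p,p}(\R^{2d})=L^p(\R^{2d})$ and every linear change of variable with unit Jacobian is an $L^p$-isometry. If instead $S$ is upper block triangular, the symplectic constraint forces $S=\begin{pmatrix} A & B \\ 0 & A^{-T}\end{pmatrix}$ with $A^{-1}B$ symmetric, giving the clean factorization $S=D_A\,U_{A^{-1}B}$. One then verifies that $\mathcal{D}_{U_P}$, the shear $(x,\omega)\mapsto(x-P\omega,\omega)$, is an $L^{p,q}$-isometry (at each fixed $\omega$ it is a pure translation in $x$), and that $\mathcal{D}_{D_L}$ contributes only a Jacobian factor. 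Hence $\mathcal{D}_S$ is bounded on all of $L^{p,q}(\R^{2d})$, a fortiori on $V$.

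The main difficulty is $\neg(c)\Rightarrow\neg(a)$: given $p\neq q$ and $S$ not upper block triangular, exhibit $f\in\MSpace{p,q}{}$ with $\widehat{S}f\notin\MSpace{p,q}{}$. Writing $S=\Pi_J\,V_Q\,D_L\,U_P$ via Proposition~\thref{prop:decomposition}, the upper triangular tail $D_L\,U_P$ is harmless (its metaplectic image is bounded by the previous paragraph) and can be absorbed into the test function, so it suffices to exhibit unboundedness of $\widehat{\Pi_J V_Q}$ on $\MSpace{p,q}{}$ whenever $J\neq\emptyset$ or $Q\neq 0$. The idea is to probe with a one-parameter family of dilated Gaussians $f_\sigma(t)=e^{-\pi|t|^2/\sigma^2}$, tensorized so as to isolate the coordinates acted on nontrivially by $\Pi_J$ or $V_Q$. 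For a Fourier-like factor $\Pi_i$, the identity $\F g_\sigma=\sigma\,g_{1/\sigma}$ combined with the one-dimensional asymptotics $\|g_\sigma\|_{\MSpace{p,q}{}(\R)}\asymp \sigma^{1/p}$ (as $\sigma\to\infty$) and $\asymp\sigma^{1-1/q}$ (as $\sigma\to 0$) gives $\|\F g_\sigma\|_{\MSpace{p,q}{}(\R)}/\|g_\sigma\|_{\MSpace{p,q}{}(\R)}\asymp \sigma^{1/q-1/p}$, which diverges as $\sigma\to\infty$ when $p>q$ and as $\sigma\to 0$ when $p<q$. A chirp-distorted Gaussian of the same type yields the analogous mismatch for $V_Q$ with $Q\neq 0$. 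The technical burden, deferred to the appendices, is computing these $\MSpace{p,q}{}$-norm asymptotics rigorously in arbitrary dimension and checking that the combined action of $\Pi_J$ and $V_Q$ does not accidentally cancel the mismatch in $\sigma$-exponents — this is the chief obstacle of the whole argument.
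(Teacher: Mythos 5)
Your outline follows the paper's strategy closely: the commutative diagram, the reduction to $\mathcal{D}_S$ on the closed subspace $\AF(\cdot,g)(\MSpace{p,q}{})$, the closed graph argument for (a)$\Leftrightarrow$(b), the factorization $S=\Pi_J V_Q D_L U_P$ with the harmless upper block triangular tail, and the Gaussian dilation probes. The one-dimensional asymptotics you compute ($\|g_\sigma\|_{\MSpace{p,q}{}}\asymp\sigma^{1/p}$ as $\sigma\to\infty$, $\asymp\sigma^{1-1/q}$ as $\sigma\to 0$, and the ratio $\sigma^{1/q-1/p}$ for a Fourier factor) match what the paper extracts from \eqref{eq:calculation_A_Gauss} and \eqref{f-Norm}, just in a reparametrized form ($\sigma^{-2}=\varepsilon^2-1$). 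Up to this point the proposal is correct.

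The genuine gap is precisely the issue you flag at the end and do not resolve: showing that the combined action of $\Pi_J$ and $V_Q$ cannot cancel the $\sigma$-exponent mismatch. This is not a residual technicality; it is the hard part of the proof, and without it the argument is incomplete. The paper resolves it with two moves. First, \thref{lem:special_symplectic} provides \emph{two} normal forms, $S' = \prod_{i=k+1}^d\Pi_i\,V_{Q'}$ and $S'' = V_{Q''}\prod_{i=k+1}^d\Pi_i$, and the proof of \thref{thm:classification} selects the ordering according to the sign of $\tfrac{1}{p}-\tfrac{1}{q}$. Second, and crucially, in Cases~2 and~3 the test function is \emph{not} a Gaussian ambiguity function directly; it is a $g_0\in\AF(\cdot,g)(\MSpace{p,q}{})$ chosen (via Step~4 of the proof) so that $\abs{\mathcal{D}_{V_Q}g_0}$ equals the Gaussian ambiguity function. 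With that choice the ratio $\|g_0\circ\SI\|_{p,q}/\|g_0\|_{p,q}$ \emph{decouples} into $\|f\circ\prod\Pi_i\|_{p,q}/\|f\circ V_Q\|_{p,q}$, so the $\Pi$ contribution (Case~1) and the $V_Q$ contribution are compared separately, with the sign choice guaranteeing divergence rather than cancellation. A related point you skip is that the test function must be shown to actually lie in the image $\AF(\cdot,\widehat{S}^{-1}g)(\MSpace{p,q}{})$; the paper's Step~4 supplies the inversion-formula argument producing the required $\psi\in\MSpace{p,q}{}$ with prescribed ambiguity function modulus, which is not automatic once one is forced to probe with $g_0$ rather than a pure Gaussian.
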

This is a direct consequence of the following theorem.
\begin{theorem}\thlabel{thm:classification}
Let $p,q\in [1,\infty]$ and $S\in \Sp $ be given. The following statements are equivalent:
\begin{enumerate}[(a)]
\item $\mathcal{D}_S:L^{p,q}(\R^{d})\to L^{p,q}(\R^{d})$ is everywhere defined.
\item $\mathcal{D}_S:L^{p,q}(\R^{d})\to L^{p,q}(\R^{d})$ is everywhere defined and bounded.
\item $\mathcal{D}_S: \AF(\dotp,\widehat{S}^{-1}g)(\MSpace{p,q}{} )\to L^{p,q}(\R^{d})$ is everywhere defined.
\item $\mathcal{D}_S: \AF(\dotp,\widehat{S}^{-1}g)(\MSpace{p,q}{} )\to L^{p,q}(\R^{d})$ is everywhere defined and bounded.
\item One of the following conditions holds:
\begin{enumerate}[(i)]
\item $p=q$, or
\item $p\neq q$ and $S$ is an upper block triangular matrix.
\end{enumerate}
\end{enumerate}
\end{theorem}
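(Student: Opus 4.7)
The strategy is to close the cycle (e) $\Rightarrow$ (a) $\Rightarrow$ (b), (a) $\Rightarrow$ (c) $\Rightarrow$ (d), and (c) $\Rightarrow$ (e), with the last implication carrying the real content of the theorem.

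The four easy links dispose of themselves quickly. The equivalences (a)$\Leftrightarrow$(b) and (c)$\Leftrightarrow$(d) follow directly from \thref{prop:closed_operator}, since $L^{p,q}(\R^{2d})$ is closed in itself and $\AF(\dotp,\h{S}^{-1}g)(\MSpace{p,q}{})$ is closed in $L^{p,q}(\R^{2d})$ because the ambiguity map is a topological embedding of the Banach space $\MSpace{p,q}{}$ (different Schwartz windows give equivalent modulation-space norms). The implication (a)$\Rightarrow$(c) is immediate by restriction. For (e)$\Rightarrow$(a), the case $p=q$ is the change of variables formula for $L^{p}(\R^{2d})$ together with $|\det S|=1$. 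If instead $S$ is upper block triangular, the symplectic constraint forces $S=\begin{pmatrix}A & B\\0&A^{-T}\end{pmatrix}$ with $A^{-1}B$ symmetric, so $S^{-1}$ has the same shape; Fubini with the inner substitution $y=A^{-1}x-A^{-1}BA^{T}\omega$ and the outer substitution $\xi=A^{T}\omega$ yields $\norm{f\circ S^{-1}}_{p,q}=|\det A|^{1/p-1/q}\norm{f}_{p,q}$, so $\mathcal{D}_S$ is a Banach space automorphism of $L^{p,q}(\R^{2d})$.

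The main content is the implication (c)$\Rightarrow$(e), which I would prove by contraposition. The identity $\AF(f,\h{S}^{-1}g)\circ S^{-1}=\AF(\h{S}f,g)$ from the symplectic covariance \eqref{eq:symplectic_covariance} converts (c) into the statement that $\h{S}$ maps $\MSpace{p,q}{}$ into itself, so the task becomes: assuming $p\neq q$ and that $S$ has a non-zero lower-left block, exhibit $f\in \MSpace{p,q}{}$ with $\h{S}f\notin \MSpace{p,q}{}$. To prune the parameter space I apply \thref{prop:decomposition} and write $S=\prod_{i\in J}\Pi_{i}\cdot V_{Q}D_{L}U_{P}$; the trailing $D_{L}U_{P}$ is upper block triangular and, by the preceding paragraph, contributes only a Banach space automorphism of $\MSpace{p,q}{}$, so it can be absorbed into the argument. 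The hypothesis that the lower-left block of $S$ is non-zero translates to $J\neq\emptyset$ or $Q\neq 0$. The natural test class is Gaussians: the ambiguity function of two Gaussians is a complex Gaussian, its mixed-norm $L^{p,q}$ integral reduces via Schur complements to an elementary product of one-dimensional Gaussian integrals, and the effect of $\mathcal{D}_S$ is to replace the underlying quadratic form by its $S^{-T}$-conjugate, which can be computed explicitly from the factors $V_Q$ and $\Pi_i$.

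The main obstacle, and the place where the two appendices announced in the introduction presumably enter, is to organize these Gaussian computations uniformly over the index set $J$ and the symmetric matrix $Q$, so as to conclude that in every situation in which the lower-left block is non-zero one can choose the covariance of the test Gaussian to produce a transformed ambiguity function whose $L^{p,q}$-norm diverges precisely because $p\neq q$. Once this divergence is in hand for any choice of non-upper-triangular $S$ and any pair $p\neq q$, the contrapositive is complete and the cycle closes.
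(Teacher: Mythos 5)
Your proposal is correct and takes essentially the same route as the paper: reduce via \thref{prop:decomposition} together with the boundedness of $\mathcal{D}_{D_L}$, $\mathcal{D}_{U_P}$ to the case where $S^{-1}$ is built out of $\prod\Pi_i$ and $V_Q$, and then test on dilated Gaussians whose ambiguity functions are complex Gaussians, showing that the mixed-norm ratio blows up when $p\neq q$ and $S$ is not upper block triangular. One phrasing issue worth fixing: your contrapositive of (c)$\Rightarrow$(e) calls for a single $f\in\MSpace{p,q}{}$ with $\widehat{S}f\notin\MSpace{p,q}{}$, but the Gaussian family never produces such an $f$ directly (each dilated Gaussian stays Schwartz, so $\widehat{S}f$ always lies in $\MSpace{p,q}{}$); what the family actually does is make the operator norm unbounded along a one-parameter curve, after which \thref{prop:closed_operator} converts unboundedness into failure of everywhere-definedness, and this is exactly the shape the paper gives to (d)$\Rightarrow$(e).
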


Let us first comment on the implications that are obvious by now: The equivalences (a)$\Leftrightarrow$(b) and (c)$\Leftrightarrow$(d) are provided by Proposition \ref{prop:closed_operator}. The implication (b) $\Rightarrow$ (d) follows from the commutative diagram. Note that the converse (d) $\Rightarrow$ (b) appears somewhat unexpected at first. Given a densely defined closed operator $T: Y \to Y$ on a general Banach space, and a closed subspace $X \subsetneq Y$, there is no reason to expect that boundedness of the restriction $T|_X$ implies boundedness of $T$ itself, and counterexamples to this expectation are easily constructed. Nonetheless, in the setting studied here, the implication holds. 

Of the remaining implications required to complete the proof of the theorem, (e) $\Rightarrow$ (a) will turn out to be a fairly standard application of the linear change of variable formula; see the proof of \thref{thm:pq_bounded_unweighted} below. Hence the main challenge in the following will be the proof of the implication (d) $\Rightarrow$ (e). Here we will rely on the factorization of arbitrary symplectic matrices provided by \thref{prop:decomposition}.

But first, we deal with the case $p=q$. Since $L^{p,p}(\R^{2d}) = L^p(\R^{2d})$, this case is immediate by the standard change of variables formula for integrals. The only fact about symplectic matrices used here is the fact that their determinant equals $1$.

\begin{theorem}\thlabel{thm:p=q_unweighted}
The mapping $\mathcal{D}_S:L^p(\R^{2d})\to L^p(\R^{2d})$ is everywhere defined and norm-preserving for all $S\in \Sp $ and all $p\in [1,\infty]$.
\end{theorem}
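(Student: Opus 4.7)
The plan is to reduce everything to the classical linear change of variables formula, using only two facts about symplectic matrices: invertibility and the determinant identity $\det(S) = 1$. The latter follows immediately from $S^T \J S = \J$ by taking determinants (so $\det(S)^2 = 1$) together with the known fact that $\Sp$ is connected, so all elements have determinant $+1$; alternatively, it is a direct consequence of $\Sp \leq \mathrm{SL}(2d,\R)$, which is already noted in the text.

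For $p \in [1,\infty)$, I would simply compute, for any measurable $f : \R^{2d} \to \C$, that
\begin{equation}
    \norm{\mathcal{D}_S f}_{p}^{p} = \int_{\R^{2d}} \abs{f(S^{-1} x)}^{p}\, dx = \abs{\det(S)} \int_{\R^{2d}} \abs{f(y)}^{p}\, dy = \norm{f}_{p}^{p},
\end{equation}
using the substitution $y = S^{-1} x$ and $\abs{\det(S)} = 1$. In particular $\mathcal{D}_S f \in L^p(\R^{2d})$ whenever $f \in L^p(\R^{2d})$, so the operator is everywhere defined, and the identity above shows it is norm-preserving.

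For $p = \infty$, the argument is even simpler: since $S^{-1}$ is a linear bijection of $\R^{2d}$ with $\abs{\det(S^{-1})} = 1$, it preserves the class of Lebesgue null sets, so
\begin{equation}
    \norm{\mathcal{D}_S f}_{\infty} = \esssup_{x \in \R^{2d}} \abs{f(S^{-1} x)} = \esssup_{y \in \R^{2d}} \abs{f(y)} = \norm{f}_{\infty}.
\end{equation}
There is no real obstacle here; the only thing worth noting explicitly is the invocation of $\det(S) = \pm 1$, which is the single place where the symplectic structure enters. Everything else is standard measure theory, and no factorization of symplectic matrices (\thref{prop:decomposition}) is needed in this $p = q$ case.
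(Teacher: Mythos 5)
Your proof is correct and takes essentially the same approach as the paper, which simply asserts this result follows immediately from the change-of-variables formula together with $\det(S)=1$. You have supplied the details (including the $p=\infty$ case) that the paper leaves to the reader.
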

As a consequence of this observation and the already established implication (a) $\Rightarrow$ (c), we get the boundedness of $\widehat{S}: \MSpace{p}{} \to \MSpace{p}{}$, for any symplectic matrix $S$. This was noted in \cite{CauliEtAl2019} but was probably known prior to that.

From now on, let $p,q\in [1,\infty]$ be distinct. In this case, condition (e) of \thref{thm:main_unweighted} contains  restrictions on the matrix $S$, which arise from the fact that the definition of the mixed $L^{p,q}$-norm treats the integration variables $x$ and $\omega$ in an asymmetric manner. Hence any change of variables that interacts with this asymmetry ("mixes" the $x$- and $\omega$-variables) can be expected to be problematic, i.e., potentially lead to unbounded operators. The challenge in the proof of the remaining direction lies in making this intuition precise. 

The following, somewhat extreme example, based on interchanging the variables $x,\omega$, gives a hint why the intuition is correct. Pick radially symmetric functions $f\in L^p(\R^{d})\setminus L^q(\R^{d})$ and $g\in L^q(\R^{d})\setminus L^p(\R^{d})$. The tensor product $(x,\omega)\mapsto f\otimes g(x,\omega)=f(x)g(\omega)$ is in $L^{p,q}(\R^{2d})$, but $g\otimes f = \w{\J}f\otimes g \notin L^{p,q}(\R^{2d})$, so intuitively, mixing of the variables should be held under control as much as possible. The example shows that $\w{\J}:L^{p,q}(\R^{2d})\to L^{p,q}(\R^{2d})$ is not well defined. 

The following result establishes the implication (e) $\Rightarrow$ (a) of \thref{thm:classification} for the case $p \not= q$.
\begin{theorem}\thlabel{thm:pq_bounded_unweighted}
Let $A,D\in \mathrm{GL}(d,\R)$ and $B\in\R^{d\times d}$ be given. Denote $S \coloneqq\begin{pmatrix}
A & B\\0 & D
\end{pmatrix}$, then 
\[ \mathcal{D}_S :L^{p,q}(\R^{2d})\to L^{p,q}(\R^{2d})\]
is, up to a constant $C_S$, a norm-preserving isomorphism with a bounded inverse $\mathcal{D}_S^{-1} = \mathcal{D}_{S^{-1}}$.
\end{theorem}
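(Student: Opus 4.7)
The plan is to reduce the statement to a direct Fubini-plus-change-of-variables computation, relying crucially on the upper block triangular form to ensure that the $x$- and $\omega$-integrations decouple cleanly.

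First I would write down $S^{-1}$ explicitly: since $A,D$ are invertible and the lower-left block vanishes,
\begin{equation}
S^{-1} = \begin{pmatrix} A^{-1} & -A^{-1} B D^{-1} \\ 0 & D^{-1} \end{pmatrix},
\end{equation}
which is again upper block triangular of the same form. Therefore it suffices to prove the boundedness assertion for $\mathcal{D}_S$; applying the same result to $S^{-1}$ will then furnish the bounded inverse, and the identity $\mathcal{D}_S \mathcal{D}_{S^{-1}} = \mathcal{D}_{S^{-1}} \mathcal{D}_S = \mathrm{id}$ is pointwise by construction.

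Next, for $F \in L^{p,q}(\R^{2d})$, I would expand
\begin{equation}
\mathcal{D}_S F(x,\omega) = F\bigl(A^{-1}x - A^{-1}BD^{-1}\omega,\; D^{-1}\omega\bigr)
\end{equation}
and compute the norm by iterated integration, first in $x$ for fixed $\omega$, then in $\omega$. In the inner integral, the substitution $y = A^{-1}x - A^{-1}BD^{-1}\omega$ is an \emph{affine} change of variable whose Jacobian determinant $|\det A|$ depends only on $A$ (not on $\omega$), so
\begin{equation}
\int_{\R^d} \bigl|F(A^{-1}x - A^{-1}BD^{-1}\omega, D^{-1}\omega)\bigr|^p\, dx = |\det A|\, \|F(\dotp, D^{-1}\omega)\|_p^p.
\end{equation}
This is the step where upper block triangularity is used decisively: because there is no $\omega$ in the \emph{second} slot of $F$ coming from $x$, the inner $L^p$ norm ends up being a pure function of $D^{-1}\omega$. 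A further substitution $\eta = D^{-1}\omega$ in the outer integral then yields
\begin{equation}
\|\mathcal{D}_S F\|_{p,q} = |\det A|^{1/p}\, |\det D|^{1/q}\, \|F\|_{p,q},
\end{equation}
so with $C_S \coloneqq |\det A|^{1/p}|\det D|^{1/q}$ the operator $\mathcal{D}_S$ is an isometric multiple of a norm-preserving map on all of $L^{p,q}(\R^{2d})$.

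There is no real obstacle, but the step requiring attention is making explicit that the Jacobian in the inner substitution is independent of the outer variable $\omega$; if $S$ had a nonzero lower-left block, the change of variables in $x$ would depend on $\omega$ through a coupling that could not be separated, and the argument would break down. The plan is thus essentially bookkeeping combined with Fubini, and the proof is complete after the two substitutions above together with the observation about $S^{-1}$.
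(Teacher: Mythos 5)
Your proposal is correct and follows essentially the same argument as the paper: both compute $\|\mathcal{D}_S f\|_{p,q}$ by iterated integration, observing that the inner $x$-substitution (translation by $BD^{-1}\omega$ plus scaling by $A$) contributes $|\det A|^{1/p}$ independently of $\omega$, after which the outer substitution by $D$ contributes $|\det D|^{1/q}$, and then applies the same result to $S^{-1}$, which is again upper block triangular. Your explicit identification of $C_S = |\det A|^{1/p}|\det D|^{1/q}$ and of the decoupling as the decisive use of upper block triangularity matches the paper's computation exactly.
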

\begin{proof}
Let $f\in L^{p,q}(\R^{2d})$ be given.\newline
The inverse of $S$ is \[S^{-1} \coloneqq\begin{pmatrix}
A^{-1} & -A^{-1}BD^{-1}\\0 & D^{-1}
\end{pmatrix}.\]
Due to the translation invariance of the Lebesgue measure, it holds
\begin{equation}\begin{split}
\norm{\mathcal{D}_S f}_{{p,q}} {=}&\ \norm{\omega\mapsto\norm{f(A^{-1}\dotp -A^{-1}BD^{-1}\omega, D^{-1}\omega)}_p}_q \\
= &\ \norm{\omega\mapsto \abs{\det\ A}^{\frac{1}{p}}\norm{f(\dotp, D^{-1}\omega)}_p}_q \\
= &\ \abs{\det\ A}^{\frac{1}{p}}\ \abs{\det \ D}^{\frac{1}{q}}\norm{f}_{L^{p,q}},
\end{split}\end{equation}
where $a^{\pm\frac{1}{\infty}}\coloneqq1$. 

The inverse of $S$ is also an upper block triangular matrix with $C_{\SI} = C_S^{-1}$. It obviously holds $\mathcal{D}_S \mathcal{D}_{S^{-1}} f = \mathcal{D}_{S^{-1}} \mathcal{D}_S f = f$. Hence $\mathcal{D}_S$ is an isomorphism. 
\end{proof}

Before we finish the proof of our main theorem, we prove a lemma that reduces the case of general symplectic matrices $S$ to matrices having a specific structure. 
\begin{lemma} \thlabel{lem:special_symplectic}
Let $S \in \Sp$ be arbitrary, and $1 \le p,q \le \infty$. Then there exists symplectic matrices $S',\,S''$ with the following properties: 
\begin{enumerate}[(a)]
\item There exist $0 \le k \le d$ and matrices $Q',Q'' \in \R^{d \times d}$ such that 
\[
 S' =  \prod\limits_{i=k+1}^d \Pi_i\ V_{Q'},\qquad S''=  V_{Q''} \prod\limits_{i=k+1}^d \Pi_i  ~.
\]
\item $\mathcal{D}_S: \AF(\dotp,\widehat{S}^{-1}g)(\MSpace{p,q}{} )\to L^{p,q}(\R^{2d})$ is everywhere defined iff $\mathcal{D}_{S'}$ or $\mathcal{D}_{S''}$  has the same property.  
\item $S$ is upper block triangular iff $S'$ and $S''$ are.
\end{enumerate}
\end{lemma}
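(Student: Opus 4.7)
The plan is to apply Proposition~\ref{prop:decomposition} to $S$ and reduce to a presentation $S = N_1 S' N_2$, in which $S'$ has the stated structure and $N_1, N_2$ lie in the subgroup $\mathrm{H}\le\Sp$ of upper block triangular symplectic matrices. Elements of $\mathrm{H}$ are harmless: by Theorem~\ref{thm:pq_bounded_unweighted} together with the commutative diagram of Figure~\ref{fig:comm_diag}, the metaplectic lift of any invertible element of $\mathrm{H}$ is a Banach space automorphism of $\MSpace{p,q}{}$.

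Starting from $S = \prod_{i\in J}\Pi_i\cdot V_Q D_L U_P$, I would first simplify the $\Pi$-prefactor. Since the $\Pi_i$ pairwise commute, $\Pi_{i+d}=\Pi_i^{-1}$, and $\Pi_i^2$ is a diagonal sign flip belonging to $\mathrm{H}$, one has $\prod_{i\in J}\Pi_i = \prod_{i\in K}\Pi_i\cdot D_E$ for some $K\subseteq\{1,\dots,d\}$ and some $D_E\in\mathrm{H}$. The commutation $D_E V_Q = V_{EQE}D_E$ then allows $D_E$ to be merged with $D_L U_P$, yielding $S = \prod_{i\in K}\Pi_i\cdot V_{Q_1}\cdot N$ with $N\in\mathrm{H}$. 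A coordinate permutation $D_P\in\mathrm{H}$ subsequently brings $K$ onto $\{k+1,\dots,d\}$ with $k=d-|K|$, producing $S = N_1 S' N_2$ with $S' = \prod_{i=k+1}^d\Pi_i V_{Q'}$ and $N_1, N_2\in\mathrm{H}$.

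For $S''$, I would run the parallel argument on the reverse-order factorization of $S$ (obtained by inverting Proposition~\ref{prop:decomposition} applied to $S^{-1}$), yielding $S = M_1 S'' M_2$ with $S'' = V_{Q''}\prod_{i=k+1}^d\Pi_i$ and $M_1, M_2\in\mathrm{H}$. The delicate point, and the only step requiring real care, is to arrange for a common integer $k$ in the two constructions. This rests on the observation that the admissible value $|K|$ produced by the reduction is controlled by the lower-left block of $S$ (specifically, by $\operatorname{rank}(S_{21})$, which is an $\mathrm{H}$-double coset invariant), coupled with the identity $\operatorname{rank}(S_{21}) = \operatorname{rank}(S^{-1}_{21})$ for symplectic $S$; since the admissible ranges for $S$ and $S^{-1}$ then coincide, a compatible pair of factorizations exists.

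Given the decomposition $S = N_1 S' N_2 = M_1 S'' M_2$ with outer factors in $\mathrm{H}$, claim (a) is by construction, and (c) is immediate since $\mathrm{H}$ is a subgroup: $S$ is upper block triangular if and only if $S'$ is, if and only if $S''$ is. For (b), each of $\h{N_j}, \h{M_j}$ is a Banach space automorphism of $\MSpace{p,q}{}$, so the everywhere-definedness of $\h{S}$ on $\MSpace{p,q}{}$ is equivalent to that of $\h{S'}$ and of $\h{S''}$; translating back via the commutative diagram then converts this into the stated equivalence of $\w{S}$ on $\AF(\cdot, \h{S}^{-1}g)(\MSpace{p,q}{})$ and of $\w{S'}, \w{S''}$ on their respective image spaces.
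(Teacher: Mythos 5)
Your construction of $S'$ follows the paper closely and is fine: start from \thref{prop:decomposition}, absorb $D_L$, $U_P$, and the residual sign and permutation factors coming from the $\Pi_i$'s into $\mathrm{H}$-factors on the outside, and normalize the index set to $\{k+1,\dots,d\}$. You are also right that the only genuinely delicate point is producing $S''$ with the \emph{same} $k$, since the lemma as stated requires a common $k$ for both factorizations.

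However, the invariant argument you propose to resolve this does not hold up. Writing
$S' = \prod_{i=k+1}^d\Pi_i\,V_{Q'}$ in block form gives $S'_{21} = -(I-I_k) + I_k Q'$, whose rank equals $(d-k)+\operatorname{rank}\bigl((Q')_{11}\bigr)$, where $(Q')_{11}$ is the upper-left $k\times k$ block of $Q'$. Thus $\operatorname{rank}(S_{21})$ (though it is indeed an $\mathrm{H}$-double-coset invariant, and indeed $\operatorname{rank}(S_{21})=\operatorname{rank}((S^{-1})_{21})$) does \emph{not} determine $k$. Concretely, the integer $k$ is not a double-coset invariant at all: already for $d=1$ and $S=V_q$ with $q\neq 0$ one has the trivial reduction $S'=V_q$ with $k=1$, but also $V_q = -\J V_{-1/q}\,D_q\,U_{1/q}$, giving $S'=\J V_{-1/q}$ with $k=0$. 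So "the admissible range of $k$ is controlled by $\operatorname{rank}(S_{21})$" is false, and the step "since the admissible ranges for $S$ and $S^{-1}$ coincide, a compatible pair of factorizations exists" does not follow. The paper avoids this entirely: once $S'=\prod_{i=k+1}^d\Pi_i\,V_{Q'}$ is fixed, it simply sets $S'' := V_{-Q'}\prod_{i=k+1}^d\Pi_i$ and observes $(S')^{-1} = S''\cdot\diag(I_k,-I_{d-k},I_k,-I_{d-k})$, so the same $k$ appears by construction and no matching argument is needed. Replacing your rank-based matching step with this direct definition of $S''$ would close the gap.
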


\begin{proof}
Recall the factorization 
\[
S = \prod\limits_{i\in K}\Pi_i\ V_Q D_L U_P~
\] 
from \thref{prop:decomposition}, and note that both $\mathcal{D}_{D_L}$ and $D_{U_P}$ are bounded operators with bounded inverses on $L^{p,q}(\mathbb{R}^{2d})$, by \thref{thm:p=q_unweighted}. Hence $\mathcal{D}_S: \AF(\dotp,\widehat{S}^{-1}g)(\MSpace{p,q}{} )\to L^{p,q}(\R^{2d})$ is everywhere defined iff $\mathcal{D}_{M}$ has the same property, where 
\[
M = \prod\limits_{i\in K}\Pi_i\ V_Q.
\] 
Furthermore, since the set of upper block triangular matrices is a subgroup containing $D_L$ and $U_P$, $M$ is upper block triangular iff $S$ is.

It remains to reduce the case of general index sets $K$ to $K= \{k+1,\ldots,d \}$. To begin with, $\Pi_i \Pi_{i+d} = I$, so we can assume that for no $i\in\{1,\cdots,d\}$ both $i$ and $i+d$ are in $I$. 
To separate the indices, we define 
\begin{equation}
    I_< =I\cap \{1,...,d\} \quad\text{and}\quad I_> = \{i\in \{1,\cdots,d\}\mid i+d\in I\}.
\end{equation}
By assumption, $I_<$ and $I_>$ are disjoint. We define the diagonal matrix $K_{I}\in\R^{d\times d}$, which serves to correct the sign:
\begin{equation}
    \left(K_{I}\right)_{j,j} = \begin{cases} 
-1&,\text{ if }\; j\in I_>,\\
1&,\text{ otherwise.}
\end{cases}
\end{equation}
This results in
\begin{equation}
    \prod\limits_{i\in I}\Pi_i = \prod\limits_{i\in I_>}\Pi_i^T \prod\limits_{i\in I_<}\Pi_i
= \diag(K_{I},K_I)\ \prod\limits_{i\in I_>\bigcup I_<}\Pi_i.
\end{equation}
Finally, we observe 
\begin{equation}
    \prod\limits_{i\in I}\Pi_i\ e_j \neq e_j\qquad\text{ if and only if}\qquad \prod\limits_{i\in I}\Pi_i\ e_{j+d}\neq e_{j+d}.
\end{equation}
So as not to have to deal with the order of the indices, we write 
\begin{equation}
    \prod\limits_{i\in I_>\bigcup I_<}\Pi_i = \diag(R_I,R_I)\prod\limits_{i=k+1}^d\Pi_i,
\end{equation}
where $k=d-\abs{I}$ and $R_I$ is the permutation matrix which represents the permutation of the indices in $I$ with $\{k+1,\cdots, d\}$. 
Altogether, we get the slightly more complicated decomposition
\begin{equation}\begin{split}
M&=\diag(K_{I},K_I)\ \diag(R_I,R_I)\prod\limits_{i=k+1}^d\Pi_i\ V_Q \\
&= \diag(K_{I}R_I,K_I R_I)\ \underbrace{\prod\limits_{i=k+1}^d\Pi_i\ V_Q}_{\eqqcolon S'}.
\end{split}\end{equation}
Hence, one more appeal to \thref{thm:pq_bounded_unweighted} provides that $\mathcal{D}_{S'}$ is everywhere defined iff the same holds for $\mathcal{D}_{S}$.
In addition, $S'$ is upper block triangular iff $S$ has the same property. Hence (a) and (b) are established for the factorization
\[
S' = \prod\limits_{i=k+1}^n\Pi_i\ V_Q~.
\]
For the verification of (c), observe that the equation 
\begin{equation}
\widetilde{S} = \diag(K_{I}R_I,K_I R_I) S'
\end{equation}
implies that $S'$ is upper block triangular iff $\widetilde{S}$ is, and we already established that this holds iff $S$ is upper block triangular. 

We have thus shown the desired statements for the first factorization in (a). The second factorization, with the associated properties, is obtained by applying the first factorization to the inverse matrix $\SI$, and taking inverses. Denoting by $I_l$ the indentity matrix in $\R^l$, we can express the inverse of $S'$ as
\begin{equation}
    (S')^{-1} = V_{-Q} \prod\limits_{i=k+1}^d\Pi_i^T = \underbrace{ V_{-Q} \prod\limits_{i=k+1}^d\Pi_i }_{\eqqcolon S''}\ \diag(I_k,-I_{d-k},I_k,-I_{d-k}).
\end{equation}
The matrix $S''$ satisfies the claims by analogous arguments.
\end{proof}

We can now finish the proof of our main theorem.
\begin{proof}[Proof of \thref{thm:classification}]
It remains to prove the implication (d) $\Rightarrow$ (e) of the theorem, for the case $p \not= q$. Assume that $S \in \Sp$ is not upper triangular. We intend to establish that $\mathcal{D}_S: \AF(\MSpace{p,q}{}, g) \to L^{p,q}(\R^{2d})$ is unbounded. Recall from the previous lemma that, without loss of generality, we can assume
\begin{equation} \label{eqn:S_special_form}
\SI = V_Q\prod\limits_{i=k+1}^d\Pi_i
\qquad \text{or}\qquad
\SI = \prod\limits_{i=k+1}^d \Pi_i\ V_Q .
\end{equation} 

The first factorization will be used to treat the case $p<q$, and the second one for the case $p>q$. If $Q=0$, this choice is irrelevant.

Now let $\varepsilon>1$, set $E\coloneqq\epsilon I\in\R^{d\times d}$ and denote with $\Delta\coloneqq(1-\varepsilon^{-2})^{\frac{1}{2}}I$. The matrices 
\begin{equation}
    E,\quad E^{-1},\quad(\varepsilon^2-1)^{\scriptscriptstyle \frac{1}{2}}I,\quad \Delta
\end{equation}

 are all symmetric positive definite, with $\Delta^2+E^{-2} = I$.

It holds
\begin{equation}
\AF\left(\ g\circ (\varepsilon^2-1)^{\frac{1}{2}}I, g\ \right)(x,\omega) = \abs{\det \ E}^{-1} e^{\pi i x\cdot \omega- 2\pi i \omega\cdot  E^{-2}x}e^{-\pi x\cdot \Delta^2 x} e^{- \pi\omega\cdot  E^{-2}\omega}. \label{eq:calculation_A_Gauss}
\end{equation}
The interested reader can find the detailed computation proving this equation in Appendix~\ref{app:calculation_A_Gauss}.

The term $\abs{\det \ E}^{-1} e^{\pi i x\cdot \omega- 2\pi i \omega\cdot  E^{-2}x}$ has no effect on the norm estimate. Therefore we focus on $f(x,\omega)\coloneqq e^{-\pi x\cdot \Delta^2 x} e^{- \pi\omega\cdot  E^{-2}\omega}$ and $f \circ \SI$. It holds
\begin{equation}\label{f-Norm}
\norm{f}_{{p,q}} = \norm{g\circ\Delta}_p \norm{g\circ E^{-1}}_q \myeq{\ref{lemma:NormFormel}}{\asymp} \det (\Delta^2)^{-\frac{1}{2p}}\det(E^{-2})^{-\frac{1}{2q}}.
\end{equation}
We denote the blocks of $\SI$ by
\begin{equation}
\SI 
=: \begin{pmatrix}
A & B\\C & D\end{pmatrix}
\end{equation}
and compute $f \circ \SI$ as 
\begin{equation}
(f \circ \SI)(x,\omega) = \exp \left(-\pi (Ax+B\omega)\cdot \Delta^2(Ax+B\omega)-\pi(Cx+D\omega)\cdot  E^{-2}(Cx+D\omega)\right).
\end{equation}
By expanding and grouping the quadratic terms, one can show that 
\begin{equation}
\norm{f \circ \SI }_{{p,q}} \asymp \abs{\det \ \Sigma}^{-\frac{1}{2p}} \abs{\det \ \Omega}^{-\frac{1}{2q}} \label{eq:calculation_unweighted_norm}
\end{equation}
for all $p,q\in [1,\infty],\ p\neq q$, where 
\begin{equation}\begin{split}
\Sigma&\coloneqq A^T\Delta^2A +C^T E^{-2}C, \\
\beta&\coloneqq B^T\Delta^2A+D^T E^{-2}C, \\
\Omega&\coloneqq B^T\Delta^2 B+D^T E^{-2}D-\beta\Sigma^{-1}\beta^T.
\end{split}\end{equation}
The technical calculation is provided in Appendix \ref{app:calculation_unweighted_norm}.

We now compute $\Sigma, \beta$, and $\Omega$ for a few extremal cases and compare the norms. Throughout, we denote with $I_k =\diag (1,\dots, 1,0,\dots, 0)\in\R^{d\times d}$ the diagonal matrix with $k$ ones and $d-k$ zeros. 
\\[.2cm]
\textbf{Case 1:} $\SI = \prod\limits_{i=k+1}^d\Pi_i\ V_0 = \begin{pmatrix}
I_k & I-I_k\\ I_k-I & I_k
\end{pmatrix}$.\\[.3ex]
We keep in mind that $\Delta,\,E\in\R I$, i.e., $\Delta$ and $ E $ are multiples of $I$, as well as the fact that $(I-I_k)$ and $I_k$ are diagonal matrices with $(I-I_k)\cdot I_k = 0$. It thereby holds
\begin{equation}\begin{split}
\Sigma&= A^T\Delta^2A +C^T E^{-2}C = I_k\Delta^2I_k+(I_k-I)E^{-2}(I_k-I) = \Delta^2 I_k +  E^{-2}(I-I_k), \\[.7ex]
\beta&= B^T\Delta^2A+D^T E^{-2}C = (I_k-I)\Delta^2 I_k +I_k E^{-2}(I-I_k) = 0+0=0 \\[.7ex]
&\text{and} \\[.7ex]
\Omega&= B^T\Delta^2 B+D^T E^{-2}D-\beta\Sigma^{-1}\beta^T\\
&= (I-I_k)\Delta^2 (I-I_k)+I_k E^{-2}I_k= \Delta^2 (I-I_k)+ E^{-2}I_k.\\[.7ex]
\end{split}\end{equation}
For the sake of contradiction, assume $\mathcal{D}_S$ were bounded. In that case, it would hold
\begin{equation}\begin{split}
1&\gtrsim \abs{\det \ \Delta^2}^{\frac{1}{2p}}\ \abs{\det\ E^{-2}}^{\frac{1}{2q}}\ 
\abs{\det \ \Sigma}^{-\frac{1}{2p}}\  \abs{\det \ \Omega}^{-\frac{1}{2q}} \\[.7ex]
&= (1-\varepsilon^{-2})^{\frac{d}{2p}}\ (\varepsilon^{-2})^{\frac{d}{2q}}\
(1-\varepsilon^{-2})^{-\frac{k}{2p}}\ (\varepsilon^{-2})^{-\frac{d-k}{2p}}\ 
(1-\varepsilon^{-2})^{-\frac{d-k}{2q}}\ (\varepsilon^{-2})^{-\frac{k}{2q}} \\[.7ex]
&= (1-\varepsilon^{-2})^{\frac{d}{2p}-\frac{k}{2p}-\frac{d-k}{2q}} (\varepsilon^{-2})^{\frac{d}{2q}-\frac{d-k}{2p}-\frac{k}{2q}} \\[.7ex]
&= (1-\varepsilon^{-2})^{(d-k)(\frac{1}{2p}-\frac{1}{2q})} \ (\varepsilon^{-2})^{(d-k)(\frac{1}{2q}-\frac{1}{2p})} \\[.7ex]
&=  \left(\frac{1-\varepsilon^{-2}}{\varepsilon^{-2}}\right)^{(d-k)(\frac{1}{2p}-\frac{1}{2q})} = \left(\varepsilon^2-1\right)^{(d-k)(\frac{1}{2p}-\frac{1}{2q})}.\\[.7ex]
\end{split}
\end{equation}
The matrix is not the identity matrix, so $k<d$. The exponent is not $0$ due to $p\neq q$. Depending on the sign of the exponent, by observing the expression as $\varepsilon\to\infty$ or $\varepsilon\searrow 1$, we conclude that the estimate cannot hold. Looking at $f$, it is clear that the exact index set of the permutations is irrelevant, only its cardinality matters.
\\[.5cm]
\textbf{Case 2:} $p<q$, $Q\neq 0$ and $\SI =V_{-Q}\ \prod\limits_{i=k+1}^d\Pi_i.$\\[.5ex]

Assume there were a $g_0\in \AF(\MSpace{p,q}{}, g)$ with
\begin{equation}
\abs{\w{V_Q}\, g_0} =\abs{\AF\left(g\circ(\varepsilon^2-1)^{\frac{1}{2}}I,\ g\right)}.
\end{equation}
In that case, holds
\begin{equation}
    \norm{g_0}_{{p,q}} =\norm{\abs{\det \  E}^{-1}f\circ V_Q}_{{p,q}} \quad\text{and}\quad \norm{ g_0 \circ \SI }_{{p,q}}=\norm{\abs{\det \  E}^{-1}f\circ \prod\limits_{i=k+1}^d\Pi_i}_{{p,q}}.
\end{equation}
According to Case 1, it holds

\begin{equation}
\norm{g_0 \circ \SI}_{{p,q}}
\asymp\abs{\det \  E}^{-1}(1-\varepsilon^{-2})^{-\frac{k}{2p}}\ (\varepsilon^{-2})^{-\frac{d-k}{2p}}\ 
(1-\varepsilon^{-2})^{-\frac{d-k}{2q}}\ (\varepsilon^{-2})^{-\frac{k}{2q}} .    
\end{equation}
For $\norm{g_0}_{{p,q}}$, we compute the corresponding $\Sigma,\ \beta$ and $\Omega$. To this end, we write
$Q=U\Lambda U^T$, where $U\in \mathrm{O}(d,\R)$ and $\Lambda$ diagonal, and use $\Delta,\,E\in\R I$ to obtain
\begin{align}
\Sigma&= A^T\Delta^2A +C^T E^{-2}C = \Delta^2+ Q E^{-2}Q = U\left(I- E^{-2} +\Lambda^2 E^{-2}\right) U^T\  \\[.7ex]
&=U\left(\diag\left(1-\varepsilon^{-2}+\Lambda^2_{i,i}\ \varepsilon^{-2}\right)_{1\leq i\leq d}\right) U^T,\\[.5ex]
&\text{ hence} \\
\Sigma^{-1}&= U\ \diag \left(\frac{1}{1-\varepsilon^{-2}+\Lambda^2_{i,i}\ \varepsilon^{-2}}\right)_{1\leq i\leq d}\ U^T, \\[.7ex]
\beta&= B^T\Delta^2A+D^T E^{-2}C =  E^{-2} Q ~, \\[.7ex]
\Omega&=\ B^T\Delta^2 B+D^T E^{-2}D-\beta\Sigma^{-1}\beta^T=  E^{-2} -  E^{-2} Q \Sigma^{-1} Q E^{-2}= \\[.7ex]
&=\  E^{-2}\left(I-U\Lambda U^T\ U\ \diag \left(\frac{1}{1-\varepsilon^{-2}+\Lambda^2_{i,i}\ \varepsilon^{-2}}\right)_{1\leq i\leq d}\ U^T\ U\Lambda U^T\  E^{-2}\right) \\[.7ex]
&=\  E^{-2}\ U\left(I-\ \diag \left(\frac{\Lambda^2_{i,i}\varepsilon^{-2}}{1-\varepsilon^{-2}+\Lambda^2_{i,i}\ \varepsilon^{-2}}\right)_{1\leq i\leq d}\right)\ U^T \\[.7ex]
&=\ U\  E^{-2}\ \diag \left(\frac{1-\varepsilon^{-2}+\Lambda^2_{i,i}\ \varepsilon^{-2}\ -\Lambda^2_{i,i}\varepsilon^{-2}}{1-\varepsilon^{-2}+\Lambda^2_{i,i}\ \varepsilon^{-2}}\right)_{1\leq i\leq d}\ U^T \\[.7ex]
&=\ U\  E^{-2}\Delta^{2}\ \diag \left(\frac{1}{1-\varepsilon^{-2}+\Lambda^2_{i,i}\ \varepsilon^{-2}}\right)_{1\leq i\leq d}\ U^T.
\end{align}
Assume $\mathcal{D}_S$ were bounded. Then it would hold
\begin{align}
1&\gtrsim \norm{g_0}^{-1}_{{p,q}}\norm{g_0 \circ \SI}_{{p,q}}\asymp
\abs{\det \  E}\abs{\det \ \Sigma}^{\frac{1}{2p}}\  \abs{\det  \ \Omega}^{\frac{1}{2q}}\;\norm{g_0 \circ \SI}_{{p,q}}\\[.6ex]
&= \left(\prod\limits_{i=1}^d \Lambda^2_{i,i}\varepsilon^{-2}+(1-\varepsilon^{-2})\right)^{\frac{1}{2p}}\ \left(\prod\limits_{i=1}^d 
\frac{1}{\Lambda^2_{i,i}\varepsilon^{-2}+(1-\varepsilon^{-2})}
\right)^{\frac{1}{2q}} 
 \\[.6ex]
&\qquad\qquad\qquad\qquad\qquad\qquad\qquad\qquad(1-\varepsilon^{-2})^{\frac{d}{2q}}(\varepsilon^{-2})^{\frac{d}{2q}} \abs{\det \  E}\cdot \norm{\ST g_0}_{{p,q}}
\\[.3ex]
&=\left(\prod\limits_{i=1}^d \Lambda^2_{i,i}\varepsilon^{-2}+(1-\varepsilon^{-2})\right)^{\frac{1}{2p}-\frac{1}{2q}}\ (1-\varepsilon^{-2})^{\frac{d}{2q}}(\varepsilon^{-2})^{\frac{d}{2q}} \\
&\qquad\qquad\qquad\qquad(1-\varepsilon^{-2})^{-\frac{k}{2p}}\ (\varepsilon^{-2})^{-\frac{d-k}{2p}}\ 
(1-\varepsilon^{-2})^{-\frac{d-k}{2q}}\ (\varepsilon^{-2})^{-\frac{k}{2q}}\qquad\qquad \\[.6ex]
&=\left(\prod\limits_{i=1}^d \Lambda^2_{i,i}\varepsilon^{-2}+(1-\varepsilon^{-2})\right)^{\frac{1}{2p}-\frac{1}{2q}}\ (1-\varepsilon^{-2})^{-k(\frac{1}{2p}-\frac{1}{2q})}(\varepsilon^{-2})^{-(d-k)(\frac{1}{2p}-\frac{1}{2q})} \\
&=\left(\ \prod\limits_{i=1}^k \frac{\Lambda^2_{i,i}\varepsilon^{-2}+(1-\varepsilon^{-2})}{1-\varepsilon^{-2}}\right)^{\frac{1}{2p}-\frac{1}{2q}}\cdot\quad \left(\ \prod\limits_{i=k+1}^d \frac{\Lambda^2_{i,i}\varepsilon^{-2}+(1-\varepsilon^{-2})}{\varepsilon^{-2}}\right)^{\frac{1}{2p}-\frac{1}{2q}} \\
&=\left(\prod\limits_{i=1}^k \frac{\Lambda^2_{i,i}}{\varepsilon^2-1}+1\right)^{\frac{1}{2p}-\frac{1}{2q}}\; \left(\prod\limits_{i=k+1}^d \Lambda^2_{i,i}+\varepsilon^2-1\right)^{\frac{1}{2p}-\frac{1}{2q}}.
\end{align}
\newline
The exponent $\frac{1}{2p}-\frac{1}{2q}$ is positive. If $k<d$, then the second product is not empty. The first product is bounded from below by $1$. Since
\begin{equation}
\lim\limits_{\varepsilon\to\infty} \prod\limits_{i=k+1}^d \Lambda^2_{i,i}+\varepsilon^2-1=\infty,
\end{equation}
we meet a contradiction.

If $k=d$, then the second product is empty, i.e., it equals $1$. The symmetric matrix $Q$ has at least one eigenvalue $\Lambda_{i,i}\neq 0$. In this case,
\begin{equation}
\lim\limits_{\varepsilon\to 1} \prod\limits_{j=1}^d \frac{\Lambda^2_{j,j}}{\varepsilon^2-1}+1\geq 1^{d-1} \lim\limits_{\varepsilon\to 1} \frac{\Lambda^2_{i,i}}{\varepsilon^2-1}+1 = \infty,
\end{equation}
contradicts the assumption.
\newline
Overall, $\mathcal{D}_S$ cannot be bounded.
\\[.5cm]
\textbf{Case 3:} $p>q$, $Q\neq 0$ and $\SI =\prod\limits_{i=k+1}^d\Pi_i^T\ V_{Q}.$\\[.5ex]
Similarly to the previous case, we are looking for $h_0\in \AF(\dotp, g)(\Omega^{p,q}(\R^{d}))$ with
\begin{equation}
    \abs{h_0 \circ \prod\limits_{i=k+1}^d\Pi_i}=\abs{ \AF\left(\ g\circ(\varepsilon^2-1)^{\frac{1}{2}}I,\ g\ \right)}.
\end{equation}
Then, it holds
\begin{equation}
    \norm{h_0}_{{p,q}} =\norm{\abs{\det \  E}^{-1}f\circ \prod\limits_{i=k+1}^d\Pi_i}_{{p,q}} \quad\text{and}\quad \norm{h_0 \circ \SI}_{{p,q}}=\norm{\abs{\det \  E}^{-1}f\circ V_Q}_{{p,q}}.
\end{equation}
Under the assumption that $\mathcal{D}_S$ is bounded, the computation in the second step would imply 
\begin{equation}\begin{split}
1&\gtrsim \norm{h_0}^{-1}_{{p,q}}\norm{\ST\, h_0}_{{p,q}} \asymp\left(\prod\limits_{i=1}^k \frac{\Lambda^2_{i,i}}{\varepsilon^2-1}+1\right)^{\frac{1}{2q}-\frac{1}{2p}}\; \left(\prod\limits_{i=k+1}^d \Lambda^2_{i,i}+\varepsilon^2-1\right)^{\frac{1}{2q}-\frac{1}{2p}}.
\end{split}\end{equation}
We have already seen that this cannot be true. Therefore, $\mathcal{D}_S$ is unbounded.
\\[.5cm]
\textbf{Step 4: (The existence of $g_0$ and $h_0$)} 
\newline
It is known that $\varphi\in \mathcal{S}(\R^{d})$ is equivalent to $\AF(\varphi,\gamma )\in \mathcal{S}(\R^{2d})$ for all $\gamma\in\mathcal{S}(\R^{d})$, in particular, $\AF(\varphi,\gamma)\in L^{p,q}(\R^{2d})$.
\newline
For a given $R\in \Sp $ and $\varphi\in \mathcal{S}(\R^{d})$, we are looking for a $\psi\in \MSpace{p,q}{}$ with
\begin{equation}
     \abs{\AF(\varphi,g)}=\abs{\w{R}\AF(\psi,g)}=\abs{\AF(\widehat{R}{\psi},\widehat{R}g)}.
\end{equation}
We can actually solve
\begin{equation}
    \AF(\widehat{R}{\psi},\widehat{R}g) = \AF(\varphi,g)
\end{equation} for $\psi$. For that, we choose $\gamma\in \mathcal{S}(\R^{d})$ with $\left\langle \gamma, g\right\rangle\neq 0\neq \left\langle \gamma, \widehat{R} g\right\rangle$. The function $\gamma$ exists because $\mathcal{S}(\R^{d})$ is not two-dimensional. With the inversion formula, we get 
\begin{equation}
    \AF(\dotp,\gamma)^*\AF(\dotp,\widehat{R}g) \widehat{R}\psi = 
\AF(\dotp,\gamma)^*\AF(\dotp, g) \varphi,
\end{equation}
i.\,e.,
\begin{equation}
    \left\langle \gamma,\widehat{R}g\right\rangle^{-1} \widehat{R}\psi = \left\langle \gamma,g\right\rangle^{-1} \varphi.
\end{equation}
In our case, $\varphi = g\circ (\varepsilon^2-1)^{\scriptscriptstyle \frac{1}{2}}I$, so the claim can be applied to $g_0$ and $h_0$. A possible choice would be 
\begin{equation}
    \psi = \frac{ \left\langle \gamma,\widehat{R}g\right\rangle}{\left\langle \gamma,g\right\rangle}\widehat{R}^{-1}\varphi.
\end{equation}
To summarize,
\begin{equation}
    \mathcal{D}_S:\AF(\dotp,\gamma)(\MSpace{p,q}{})\to L^{p,q}(\R^{2d})
\end{equation}
cannot be bounded, hence it cannot be everywhere defined on  $\AF(\dotp,\gamma)(\MSpace{p,q}{})$.
\end{proof}
\section{Weighted Spaces}\label{sec:weighted}
\subsection{The Lifting on \texorpdfstring{$L^{p,q}_m(\R^{2d})$}{weighted Lebesgue spaces}}
Figure \ref{fig:comm_diag_wght} contains the weighted version of the fundamental commutative diagram.
\begin{figure}[ht] 
\[
  \begin{tikzcd}
    \MSpace{p,q}{m}  \arrow{rr}{\AF(\dotp, g)} \arrow{dddd}[swap]{\widehat{S}}& & \AF(\dotp,g)(\MSpace{p,q}{m} ) \arrow[rr,hookrightarrow]{}\arrow[dddd,dashed]{} && L^{p,q}_m(\R^{2d}) \arrow{dddd}{(\mathcal{D}_S)_m} \\
    &&&&\\
    &&&&\\
    &&&&\\
    \MSpace{p,q}{m}  \arrow{rr}[swap]{\AF(\dotp,\widehat{S}^{-1}g)}& & \AF(\dotp, \widehat{S}^{-1}g)(\MSpace{p,q}{m} ) \arrow[rr,hookrightarrow]{} && L^{p,q}_m(\R^{2d})  \\
  \end{tikzcd}
\]
\caption{Weighted version of the commutative diagram} \label{fig:comm_diag_wght}
\end{figure}

The spaces $L^{p,q}(\R^{2d})$ and $L^{p,q}_m(\R^{2d})$ are obviously isomorphic via
\begin{equation}
    \Phi_m:L^{p,q}_m(\R^{2d})\to L^{p,q}(\R^{2d}),\quad f\mapsto f\cdot m.
\end{equation}
In contrast to the pair $(L^{p,q}(\R^{2d}), L^{p,q}_m(\R^{2d}))$, the relationship $(\MSpace{p,q}{} , \MSpace{p,q}{m} )$ is not that obvious, requiring the formulation and proof of fairly intricate \textit{lifting theorems} (cf. \cite{DoerflerGroechenig2011, GroechenigToft2011, GroechenigToft2013} for isomorphisms between weighted modulation spaces).  Abstract coorbit theory does imply that the modulation spaces $\MSpace{p,q}{m_1}$, $\MSpace{p,q}{m_2}$ are isomorphic for two moderate weights $m_1,\, m_2$, but this observation provides little help when it comes to the particular test we have on the block structure of the projection of $\widehat{S}$. 

Before we turn to these rather subtle questions, we first focus on the purely measure-theoretic case, i.e., question whether the dilation operator acting on the weighted space 
\begin{equation}
    (\mathcal{D}_S)_m:L^{p,q}_m(\R^{2d})\to L^{p,q}_m(\R^{2d}),\qquad f\mapsto f\circ \SI
\end{equation} is bounded. Using the isomorphism $\Phi_m$ from above, we compute 
\begin{equation}\label{eq:weight_transformation}
\Phi_m\ (\mathcal{D}_S)_m\ \Phi_m^{-1} f = m\cdot\left(\left(\tfrac{f}{m}\right)\circ \SI\right) =m\cdot\ \frac{f\circ \SI}{m\circ \SI} = \frac{m}{m\circ \SI}\cdot \mathcal{D}_S f,
\end{equation}
where, as in the previous section, $\mathcal{D}_S$ denotes the dilation operator acting on $L^{p,q}(\R^{2d})$. 
\begin{theorem}\thlabel{thm:multiplication_operator_M_m}
Let $m$ be a moderate weight function. Let $S\in \Sp $ be given. The following statements hold:
\begin{enumerate}[(a)]
\item If $\mathcal{D}_S: L^{p,q}(\R^{2d}) \to L^{p,q}(\R^{2d})$ is bounded, then $(\mathcal{D}_S)_m$ is bounded if and only if 
\begin{equation}
R_m\coloneqq\esssup\limits_{z\in\R^{2d}}\  {\frac{m(z)}{m(\SI z)}}<\infty.
\end{equation}
\item If $\mathcal{D}_S: L^{p,q}(\R^{2d}) \to L^{p,q}(\R^{2d})$ is unbounded and 
\begin{equation}
    T_m\coloneqq\essinf\limits_{z\in\R^{2d}}\ \frac{m(z)}{m (\SI  z)}>0,
\end{equation}
then $(\mathcal{D}_S)_m$ is unbounded.
\end{enumerate}
\end{theorem}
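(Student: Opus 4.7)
The plan is to leverage the conjugation identity $\Phi_m\,(\mathcal{D}_S)_m\,\Phi_m^{-1} = M_{\phi}\,\mathcal{D}_S$ already displayed in the excerpt, where $\phi \coloneqq m/(m\circ S^{-1})$ and $M_\phi$ denotes pointwise multiplication by $\phi$. Since $\Phi_m$ is an isometric isomorphism $L^{p,q}_m(\R^{2d}) \to L^{p,q}(\R^{2d})$, the operator $(\mathcal{D}_S)_m$ is bounded on $L^{p,q}_m$ if and only if $M_\phi\,\mathcal{D}_S$ is bounded on $L^{p,q}$. The entire proof is then a manipulation of this factorisation, using that multiplication operators on $L^{p,q}(\R^{2d})$ are bounded precisely when their symbol lies in $L^\infty(\R^{2d})$ (this standard fact, proved by testing on indicator functions of sets of finite, nonzero mixed measure, I would state and briefly justify in passing).

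For part (a), the direction $R_m < \infty \Rightarrow (\mathcal{D}_S)_m$ bounded is immediate: $\phi \in L^\infty$ with $\|\phi\|_\infty = R_m$, so $M_\phi$ is bounded on $L^{p,q}$ with operator norm $R_m$, and composing with the bounded operator $\mathcal{D}_S$ yields the claim. For the converse, suppose $(\mathcal{D}_S)_m$ is bounded. Here I would invoke \thref{thm:classification}: whenever $\mathcal{D}_S$ is bounded on $L^{p,q}$, either $p=q$ (and $\mathcal{D}_{S^{-1}}$ is also bounded by \thref{thm:p=q_unweighted}) or $S$ is upper block triangular, in which case so is $S^{-1}$, and \thref{thm:pq_bounded_unweighted} gives boundedness of $\mathcal{D}_{S^{-1}} = \mathcal{D}_S^{-1}$. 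Thus I can right-multiply by $\mathcal{D}_S^{-1}$ to isolate $M_\phi = (M_\phi \mathcal{D}_S)\mathcal{D}_S^{-1}$ as a bounded operator on $L^{p,q}$, whence $\phi \in L^\infty$ and $R_m < \infty$.

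For part (b), the argument is symmetric. The hypothesis $T_m > 0$ says that $1/\phi = (m \circ S^{-1})/m$ is essentially bounded by $1/T_m$, so $M_{1/\phi}$ is bounded on $L^{p,q}$. If $(\mathcal{D}_S)_m$ were bounded, then $M_\phi\,\mathcal{D}_S$ would be bounded on $L^{p,q}$, and composing on the left with $M_{1/\phi}$ would force $\mathcal{D}_S$ itself to be bounded on $L^{p,q}$, contradicting the hypothesis.

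The main obstacle is not conceptual but rather a minor bookkeeping point: ensuring that $\mathcal{D}_{S^{-1}}$ is available as a bounded operator in the converse of (a). This is handled uniformly by appealing to \thref{thm:classification} so that one need not worry about whether boundedness of $\mathcal{D}_S$ on the abstract level automatically propagates to $\mathcal{D}_{S^{-1}}$; the explicit classification supplies it. Beyond this, the only non-trivial fact used is the equivalence between boundedness of a multiplication operator on $L^{p,q}(\R^{2d})$ and essential boundedness of its symbol, which I would prove by applying $M_\phi$ to a characteristic function $\chi_E$ of a measurable rectangle $E = E_1 \times E_2$ of finite positive measure: if $\|M_\phi \chi_E\|_{p,q} \le C\|\chi_E\|_{p,q}$ for every such $E$, a standard Lebesgue differentiation argument forces $|\phi|\le C$ almost everywhere.
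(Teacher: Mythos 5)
Your proposal is correct and follows essentially the same route as the paper: conjugate by $\Phi_m$ to reduce to the factorisation $M_\phi\,\mathcal{D}_S$, invoke \thref{thm:classification} to obtain $\mathcal{D}_{S^{-1}}$ as a bounded inverse in part (a), and reduce both directions to the boundedness of the multiplication operator $M_\phi$ on $L^{p,q}$, which is characterised by $\phi\in L^\infty$. The only cosmetic difference is in part (b), where the paper exhibits an explicit witness $f$ with $\mathcal{D}_S f\notin L^{p,q}$ (via \thref{prop:closed_operator}) and observes $\lVert M_m\,\mathcal{D}_S f\rVert\geq T_m\lVert\mathcal{D}_S f\rVert=\infty$, whereas you phrase the same fact contrapositively by composing $M_{1/\phi}$ with the assumed-bounded $M_\phi\,\mathcal{D}_S$; these are equivalent.
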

\begin{proof}
\begin{enumerate}[(a)]
\item If $\mathcal{D}_S$ is bounded, it is in fact an automorphism, by Theorem \ref{thm:classification}. Then Equation \eqref{eq:weight_transformation} implies that $ (\mathcal{D}_S)_m $ is bounded if and only if the multiplication operator
\begin{equation}
    M_m: L^{p,q}\to L^{p,q},\qquad f\mapsto \frac{m}{m\circ \SI}\dotp f
\end{equation}
is bounded. A straightforward, somewhat tedious computation shows that $M_m$ is bounded if and only if $R_m$ is finite.

\item According to \thref{prop:closed_operator}, there is a $f\in L^{p,q}(\R^{2d})$ with $\ST f\notin L^{p,q}(\R^{2d})$. It holds
\begin{equation}
\norm{M_m\ \ST f}_{p,q} \geq T_m \norm{\ST f}_{p,q}=\infty.
\end{equation}
\end{enumerate}
\end{proof}
\begin{remark}
If $\ST$ is unbounded, but $T_m=0$, then it is in general difficult to make a reasonable claim. One can find bounds $\eta_r>0$, such that
\[\frac{1}{\eta_r}<\frac{m}{m\circ \SI}<\eta_r \] 
on $[-r,r]^{2n}$ (cf. \cite[Lemma 11.1.1.]{Groechenig2001}). The problem lies on the behaviour of $\eta_r$ as $r$ tends to $\infty$. If it is particularly slow, then we could expect $M_m\,\ST$ to be unbounded. It can, though, also happen that $\frac{m}{m\circ S}$ decays so fast, that it compensates for the growth of $\ST f$ and 
\begin{equation}
    M_m\, \ST: L^{p,q}\to L^{p,q}
\end{equation}
is well-defined. According to \thref{prop:closed_operator}, this would suffice for the boundedness of $(\ST)_m$.
\end{remark}
Observe that the case $m\asymp m\circ S^{-1}$ is equivalent to $0< T_m \le R_m < \infty$. In this case, the positive results from \thref{thm:multiplication_operator_M_m} yield the following corollary.
\begin{corollary}\thlabel{cor:equiv_weight}
Let $m$ be a moderate weight and $S\in\Sp$ a symplectic matrix. Assuming $m\asymp m\circ \SI$, the operator
\begin{equation}
    \mathcal{D}_S: L^{p,q}\to L^{p,q}
\end{equation} is bounded if and only if the operator
\begin{equation}
    (\mathcal{D}_S)_m: L^{p,q}_m\to L^{p,q}_m
\end{equation}
is bounded.
\end{corollary}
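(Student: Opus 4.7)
The plan is to derive the statement as an immediate consequence of \thref{thm:multiplication_operator_M_m} by unpacking the hypothesis $m\asymp m\circ \SI$ in terms of the quantities $R_m$ and $T_m$ introduced there. By definition, $m\asymp m\circ \SI$ means there are constants $c,C>0$ with $c\, m(\SI z)\le m(z)\le C\, m(\SI z)$ for almost every $z\in\R^{2d}$. Dividing by $m(\SI z)$ and taking essential suprema and infima, this is precisely the joint condition
\begin{equation}
0 < T_m \le R_m < \infty,
\end{equation}
so that the side hypotheses attached to parts (a) and (b) of \thref{thm:multiplication_operator_M_m} are both satisfied.

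With this translation in hand, the two implications fall out. For the forward direction, I would assume $\mathcal{D}_S:L^{p,q}\to L^{p,q}$ is bounded; since $R_m<\infty$, part (a) of \thref{thm:multiplication_operator_M_m} immediately yields boundedness of $(\mathcal{D}_S)_m$ on $L^{p,q}_m$. For the converse I would argue by contrapositive: if $\mathcal{D}_S$ were unbounded on $L^{p,q}$, then because $T_m>0$, part (b) of \thref{thm:multiplication_operator_M_m} would force $(\mathcal{D}_S)_m$ to be unbounded on $L^{p,q}_m$. Contraposing this delivers the remaining implication.

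There is no substantive obstacle; the corollary is really a bookkeeping observation identifying the regime in which the sufficient criterion of (a) and the necessary criterion of (b) in \thref{thm:multiplication_operator_M_m} meet. The only minor point I would double-check is that the essential supremum/infimum formulation of $R_m$ and $T_m$ is compatible with the pointwise condition $m\asymp m\circ \SI$, but since $\SI$ is a linear bijection of $\R^{2d}$ (in particular it maps null sets to null sets) and $m$ is specified pointwise as a positive function, the pointwise and essential readings coincide up to a null set and the translation above is unambiguous.
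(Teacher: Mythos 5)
Your proof is correct and follows exactly the route the paper takes: the paper notes that $m\asymp m\circ S^{-1}$ is equivalent to $0<T_m\le R_m<\infty$ and then appeals to parts (a) and (b) of \thref{thm:multiplication_operator_M_m}, which is precisely the bookkeeping you carry out. You merely spell out the two directions (forward via (a), converse by contraposing (b)) more explicitly than the paper does.
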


In light of this, we prove an analogous version for weighted modulation spaces.
\subsection{The Lifting on \texorpdfstring{$\MSpace{p,q}{m}$}{weighted modulation spaces}} To establish the connection between the metaplectic action on unweighted and on weighted modulation spaces, we make use of \emph{Toeplitz operators}. 
\begin{definition}
Let $a\in \mathcal{S}(\R^{2d})$ be a symbol and $g\in \mathcal{S}(\R^{2d})$ a fixed window. Then the Toeplitz operator $\Tp_g (a)$ is defined by the formula
\begin{equation}
\left\langle \Tp_g(a)f_1,f_2\right\rangle_{\tiny L^2(\R^{d})} = \left\langle a \mathcal{V}_g f_1, \mathcal{V}_{g} f_2\right\rangle_{\tiny L^2(\R^{2d})} = \left\langle a\, \AF(f_1,g), \AF(f_2,g) \right\rangle_{\tiny L^2(\R^{2d})}
\end{equation}
for all $f_1,f_2\in L^2(\R^{d})$. It is a well-defined operator which extends uniquely to a continuous operator from $\mathcal{S}'(\R^{d})$ to $\mathcal{S}(\R^{d})$. The class of admissible symbols can be significantly extended (cf. \cite[Proposition 1.5.]{GroechenigToft2011}). In particular, we can choose the symbol to be our weight function.
\end{definition}
\begin{theorem}[Gröchenig, Toft \cite{GroechenigToft2011}]\label{thm:lifting}
Assume that $m$ is an even, $v$-moderate weight function, for the weight $v(x) = (1+\|x\|)^N$, and $g\in\mathcal{S}(\R^{d})$. Then the Toeplitz operator $\Tp_g(m)$
is an isomorphism from $\MSpace{p,q}{m_0}$ onto $\MSpace{p,q}{m_0/m}$ for every $v$-moderate even weight $m_0$ and every $p,q\in[1,\infty]$.
\end{theorem}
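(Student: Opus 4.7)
The plan is to exploit the factorization $\Tp_g(m) = \AF(\cdot,g)^* \circ M_m \circ \AF(\cdot,g)$, where $M_m$ denotes pointwise multiplication by $m$. This viewpoint splits the proof into two tasks: boundedness of each factor on the appropriate weighted mixed-norm space, and invertibility, which is the real substance of the theorem.

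\textbf{Boundedness.} I would first verify that $\AF(\cdot,g)\colon \MSpace{p,q}{m_0}\to L^{p,q}_{m_0}(\R^{2d})$ is bounded; this is essentially the definition of the modulation-space norm, with window-change constants controlled by the $v$-moderateness of $m_0$. Pointwise multiplication by $m$ is an isometry $L^{p,q}_{m_0}\to L^{p,q}_{m_0/m}$, since $\abs{m f}\cdot m_0/m = \abs{f}\cdot m_0$. Finally, by the reconstruction formula and a duality argument (using that $m_0/m$ is also $v$-moderate), the synthesis operator $\AF(\cdot,g)^*\colon L^{p,q}_{m_0/m}\to \MSpace{p,q}{m_0/m}$ is bounded. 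Composing these three bounded maps yields the mapping property of $\Tp_g(m)$.

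\textbf{Invertibility.} I would begin on $L^2(\R^d)$ with $m_0\equiv 1$. Because $m>0$ and $g\neq 0$, the quadratic form
\[
\langle \Tp_g(m) f,f\rangle = \int_{\R^{2d}} m(\lambda)\,\abs{\AF(f,g)(\lambda)}^2\,d\lambda
\]
is strictly positive, and moderateness of $m$ combined with the orthogonality relations for the STFT yields two-sided bounds by $\|f\|_{L^2}^2$; hence $\Tp_g(m)$ is a positive, boundedly invertible operator on $L^2$. To transfer invertibility to the full scale, I would realize $\Tp_g(m)$ as a Weyl pseudodifferential operator whose symbol (obtained by convolving $m$ with the Wigner transform of $g$) belongs to a Sjöstrand-type class of $v$-moderate symbols, and invoke spectral invariance of that class: the $L^2$-inverse then remains a Weyl operator with symbol in the same class, hence bounded on every $\MSpace{p,q}{m_0/m}$ compatible with $v$. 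An equivalent route is the Janssen/Gabor-matrix viewpoint, representing $\Tp_g(m)$ as an infinite matrix with off-diagonal decay governed by $v$, and invoking a Wiener-type algebra result to conclude analogous decay for the inverse matrix.

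\textbf{Main obstacle.} The technical heart of the argument is clearly this transfer of invertibility beyond $L^2$. Boundedness, injectivity, and even $L^2$-invertibility follow from the factorization and positivity with only elementary input, but controlling $\Tp_g(m)^{-1}$ uniformly on the whole $\MSpace{p,q}{m_0/m}$-scale requires genuine spectral-invariance machinery for operator algebras with polynomial off-diagonal decay---essentially a Wiener's lemma for modulation-space-type symbols. This is the deep ingredient supplied by Gröchenig--Toft and is the step I would expect to be the most delicate to carry out in full detail.
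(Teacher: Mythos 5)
This theorem is not proved in the paper at all: it is quoted, verbatim, from Gr\"ochenig and Toft \cite{GroechenigToft2011} and used as a black box. So there is no ``paper's own proof'' to compare against, and the relevant question is whether your sketch is a faithful reconstruction of the Gr\"ochenig--Toft argument.

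The factorization $\Tp_g(m) = \AF(\cdot,g)^*\circ M_m\circ \AF(\cdot,g)$ is correct (this is exactly the identity the present paper records as Equation~\eqref{eq:TpTwisted} in twisted-convolution form), and your boundedness step is fine: analysis $\MSpace{p,q}{m_0}\to L^{p,q}_{m_0}$, the isometry $M_m: L^{p,q}_{m_0}\to L^{p,q}_{m_0/m}$, and synthesis $L^{p,q}_{m_0/m}\to \MSpace{p,q}{m_0/m}$ compose to the claimed mapping property. You are also right that the substance of the theorem is a Wiener/spectral-invariance argument, which is indeed the engine of the Gr\"ochenig--Toft proof (they realize $\Tp_g(m)$ as a Weyl operator with symbol $m*W(g,g)$ lying in a weighted Sj\"ostrand-type class and show this symbol is elliptic relative to $m$, then invoke inverse-closedness of the class).

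One step of your sketch is, however, genuinely wrong as stated. You claim that moderateness of $m$ and the STFT orthogonality relations give two-sided bounds $\langle \Tp_g(m)f,f\rangle\asymp\|f\|_{L^2}^2$, and hence that $\Tp_g(m)$ is a ``positive, boundedly invertible operator on $L^2$.'' When $m$ is an unbounded weight (the interesting case), the quadratic form $\int_{\R^{2d}} m\,|\AF(f,g)|^2$ is \emph{not} dominated by $\|f\|_{L^2}^2$, and $\Tp_g(m)$ is not bounded $L^2\to L^2$; it is an isomorphism $L^2=\MSpace{2}{1}\to\MSpace{2}{1/m}$, which is a strictly larger target space. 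The correct ``baseline'' statement to bootstrap from is this between-scale isomorphism on the $p=q=2$ level, not a self-adjoint invertibility on $L^2$, and this already requires the ellipticity/spectral-invariance machinery you rightly identify as the hard part rather than following from positivity alone. So the architecture of your argument matches the cited reference, but the $L^2$ anchor as you wrote it does not hold and cannot serve as the elementary first step.
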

We will apply this theorem with $m_0=m$. Before that, we rewrite the defining property of $\Tp_g(a) f$ in terms of its ambiguity function. We recall that the standard symplectic form $[\gamma,\lambda]=\gamma\cdot  \J\lambda$ on $\R^{2d}$ and the symmetric time-frequency shifts satisfy 
\begin{equation}
    \rho(\lambda)\rho(\gamma) = \rho(\lambda+\gamma)e^{\pi i[\gamma, \lambda]}.
\end{equation}
From the definition of the ambiguity function, one easily sees that $\overline{\AF(g,g)(\lambda)}= \AF(g,g)(-\lambda)$ holds for all $\lambda\in \R^{2d}$. Let now $g\in\mathcal{S}(\R^d),\,g\neq 0$ be an arbitrary window and $a:\R^{2d}\to \C$ a symbol. Then it holds

\begin{equation}
    \begin{split}
     A(\Tp_g(a) f, g)(\lambda) & = \left\langle \Tp_g(a) f, \rho(\lambda) g\right\rangle \\
    & = \left\langle a \AF(f,g), \AF(\rho(\lambda) g, g)\right\rangle \\
    & = \int_{\R^{2d}} a(\gamma) \AF(f,g)(\gamma) \overline{\left\langle \rho(\lambda) g, \rho(\gamma) g\right\rangle }\,d\gamma \\
    & = \int_{\R^{2d}} a(\gamma) \AF(f,g)(\gamma) \overline{ \left\langle  g,\rho(-\lambda) \rho(\gamma) g\right\rangle} \,d\gamma \\
    & = \int_{\R^{2d}} a(\gamma) \AF(f,g)(\gamma) \overline{ \left\langle  g,\rho(-\lambda + \gamma) g\right\rangle e^{-\pi i[\gamma,\lambda]}} \,d\gamma \\
    & = \int_{\R^{2d}} a(\gamma) \AF(f,g)(\gamma) e^{\pi i[\gamma,\lambda]} \AF(g,g)(\lambda-\gamma) e^{\pi i[\gamma,\lambda]} \,d\gamma.   
    \end{split}
\end{equation} 
This shows that the Toeplitz operator can be understood through the \emph{twisted convolution}. The twisted convolution $F\natural G$ of two measurable functions $F, \,G:\R^{2d}\to \C$ is given by
\begin{equation}
F\natural G(\lambda)\coloneqq \int_{\R^{2d}}F(\gamma) G(\lambda-\gamma)e^{\pi i [\gamma,\lambda]}\ d\gamma,
\end{equation}
assuming the integral is well-defined. In terms of the Toeplitz operator $\Tp_g(a)$, we can now express its action as
\begin{equation}\label{eq:TpTwisted}
    \AF(\Tp_g(a) f,g) = \left( a\,\AF(f,g)\right)\natural \AF(g,g).
\end{equation}

\begin{corollary}
Let $m$ be a moderate weight, $p,q\in[1,\infty]$. If $\widehat{S}:\MSpace{p,q}{} \to \MSpace{p,q}{} $ is bounded and 
\[C_m\coloneqq \esssup\limits_{z\in\R^{2d}}{\frac{m(z)}{m(\SI z)}}<\infty,\]
then 
\[ \widehat{S}:\MSpace{p,q}{m} \to \MSpace{p,q}{m} \]
is well-defined and bounded.
\end{corollary}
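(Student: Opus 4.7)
The plan is to reduce this to the already-established boundedness of $(\mathcal{D}_S)_m$ on $L^{p,q}_m(\R^{2d})$ by using the symplectic covariance \eqref{eq:symplectic_covariance} of the ambiguity function together with the standard window-change equivalence for modulation-space norms. First, I fix a nonzero $g \in \mathcal{S}(\R^d)$. Because metaplectic operators preserve the Schwartz class, $\widehat{S}^{-1}g \in \mathcal{S}(\R^d)\setminus\{0\}$ is again an admissible window. For $f \in \MSpace{p,q}{m} \subseteq \mathcal{S}'(\R^d)$, the distribution $\widehat{S}f \in \mathcal{S}'(\R^d)$ is well defined, and the symplectic covariance yields the key identity
\[
\AF(\widehat{S}f, g) \;=\; \AF(f, \widehat{S}^{-1}g)\circ S^{-1} \;=\; \mathcal{D}_S\,\AF(f, \widehat{S}^{-1}g).
\]

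Next, the hypothesis that $\widehat{S}:\MSpace{p,q}{}\to\MSpace{p,q}{}$ is bounded, combined with the equivalence (b)$\Leftrightarrow$(d) in \thref{thm:classification}, forces $\mathcal{D}_S$ to be bounded on $L^{p,q}(\R^{2d})$. Coupled with the assumption $C_m<\infty$, \thref{thm:multiplication_operator_M_m}(a) upgrades this to the boundedness of $(\mathcal{D}_S)_m$ on $L^{p,q}_m(\R^{2d})$. Applying this estimate to the identity above and then invoking the window-change equivalence $\|\AF(f,\widehat{S}^{-1}g)\|_{L^{p,q}_m} \asymp \|f\|_{\MSpace{p,q}{m}}$ (valid because $m$ is moderate and both $g,\widehat{S}^{-1}g$ lie in $\mathcal{S}(\R^d)\setminus\{0\}$), I conclude
\[
\|\widehat{S}f\|_{\MSpace{p,q}{m}} \;=\; \|\mathcal{D}_S\,\AF(f,\widehat{S}^{-1}g)\|_{L^{p,q}_m} \;\lesssim\; \|\AF(f,\widehat{S}^{-1}g)\|_{L^{p,q}_m} \;\asymp\; \|f\|_{\MSpace{p,q}{m}},
\]
which simultaneously delivers well-definedness (the right-hand side is finite) and the norm estimate.

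I do not expect a serious obstacle here, since every ingredient has already been developed earlier in the excerpt. The one point deserving explicit mention is the window-change equivalence for the \emph{weighted} modulation space $\MSpace{p,q}{m}$: the comparison constant between two Schwartz windows depends on the submultiplicative control weight $v$ of $m$, but for polynomial $v$ this is a standard consequence of the reconstruction formula together with a $v$-moderate estimate, see \cite{Cordero_Rodino, Groechenig2001}. Notably, the Toeplitz-operator machinery and the twisted-convolution identity \eqref{eq:TpTwisted} are not needed for this direction; they will presumably be decisive in establishing the converse implication, which this corollary does not claim.
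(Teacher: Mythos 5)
Your proposal is correct and follows essentially the same route as the paper: the paper's proof simply cites Theorems \ref{thm:classification} and \ref{thm:multiplication_operator_M_m} to obtain boundedness of $(\mathcal{D}_S)_m$ on $L^{p,q}_m$, then says ``the commutative diagram completes the proof,'' which is exactly the symplectic-covariance identity $\AF(\widehat{S}f,g)=\mathcal{D}_S\,\AF(f,\widehat{S}^{-1}g)$ together with the window-change equivalence that you spell out explicitly. Your remark that the Toeplitz/twisted-convolution machinery is not needed for this direction is also consistent with the paper, which reserves it for \thref{thm:main_weighted}.
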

\begin{proof}
Theorems \ref{thm:classification} and \ref{thm:multiplication_operator_M_m} imply that $(\mathcal{D}_S)_m:L^{p,q}_m\to L^{p,q}_m$ is bounded. The commutative diagram completes the proof.
\end{proof}
We now focus on the opposite implication.
\begin{theorem}\thlabel{thm:main_weighted}
Let $m$ be a moderate weight, $p,q\in[1,\infty]$. If $\widehat{S}\in \Mp$ with projection $\pi^{\tiny Mp}(\widehat{S})=S$ satisfies $m\asymp m\circ S^{-1}$, then $\widehat{S}$ is a bounded operator from $\MSpace{p,q}{} $ to $\MSpace{p,q}{} $ if and only if it is a bounded operator from $\MSpace{p,q}{m} $ to $\MSpace{p,q}{m} $.
\end{theorem}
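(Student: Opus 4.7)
The forward implication, that boundedness of $\h{S}:\MSpace{p,q}{}\to\MSpace{p,q}{}$ lifts to boundedness of $\h{S}:\MSpace{p,q}{m}\to\MSpace{p,q}{m}$, is already handled by the preceding corollary: the two-sided comparison $m\asymp m\circ S^{-1}$ in particular gives $C_m<\infty$, and the corollary does the rest. The entire content of the theorem is therefore the converse implication, which I would establish by combining the Gröchenig--Toft lifting theorem with a suitable operator-theoretic intertwining between $\h{S}$ and the Toeplitz operator $\Tp_g(m)$.

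The central algebraic fact I would prove first is the identity
\[
\h{S}\,\Tp_g(m) \;=\; \Tp_{\h{S}g}(m\circ S^{-1})\,\h{S},
\]
valid initially on $\mathcal{S}(\R^d)$ and extending to $\mathcal{S}'(\R^d)$ by continuity. The verification is a direct computation: apply $\AF(\dotp,\h{S}g)$ to both sides, expand the left-hand side using first \eqref{eq:symplectic_covariance} and then the twisted-convolution representation \eqref{eq:TpTwisted} of the Toeplitz action, and match with \eqref{eq:TpTwisted} applied to the right-hand side. The pivotal intermediate step is the symplectic invariance of the twisted convolution, $(F\natural G)\circ S^{-1}=(F\circ S^{-1})\natural(G\circ S^{-1})$, which reduces, after the substitution $\gamma\mapsto S^{-1}\eta$, to the two facts $S^T\J S=\J$ (so the phase $e^{\pi i[\gamma,\lambda]}$ is preserved) and $|\det S|=1$. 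Injectivity of the ambiguity transform with respect to the nonzero Schwartz window $\h{S}g$ then promotes the equality of ambiguity functions to an equality of operators.

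With the intertwining in hand, the converse direction is a short chain. Given $f\in\MSpace{p,q}{}$, Gröchenig--Toft applied with $m_0=m$ supplies $h\in\MSpace{p,q}{m}$ with $f=\Tp_g(m)h$ and $\|h\|_{\MSpace{p,q}{m}}\lesssim\|f\|_{\MSpace{p,q}{}}$. The intertwining rewrites this as $\h{S}f=\Tp_{\h{S}g}(m\circ S^{-1})\,\h{S}h$. The hypothesis gives $\|\h{S}h\|_{\MSpace{p,q}{m}}\lesssim\|h\|_{\MSpace{p,q}{m}}$, and since $m\circ S^{-1}$ is even and polynomially moderate with $\MSpace{p,q}{m\circ S^{-1}}=\MSpace{p,q}{m}$ (equivalent norms, by $m\asymp m\circ S^{-1}$), a second application of Gröchenig--Toft, now with symbol $m\circ S^{-1}$ and window $\h{S}g\in\mathcal{S}(\R^d)$, shows that $\Tp_{\h{S}g}(m\circ S^{-1}):\MSpace{p,q}{m}\to\MSpace{p,q}{}$ is bounded. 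Concatenating the three bounds yields $\|\h{S}f\|_{\MSpace{p,q}{}}\lesssim\|f\|_{\MSpace{p,q}{}}$. The principal obstacle is the clean verification of the intertwining identity, in particular the symplectic invariance of the twisted convolution and the confirmation that the identity holds at a level of generality (on $\mathcal{S}'$) sufficient to license the subsequent norm estimates on modulation spaces without circularity; the routine side conditions---evenness and moderateness of $m\circ S^{-1}$, preservation of the Schwartz class by $\h{S}$---are straightforward.
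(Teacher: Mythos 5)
Your argument is correct and uses the same four ingredients as the paper's proof---the Gr\"ochenig--Toft lifting theorem (Theorem \ref{thm:lifting}), symplectic covariance of the ambiguity function \eqref{eq:symplectic_covariance}, the twisted-convolution expression of the Toeplitz action \eqref{eq:TpTwisted}, and symplectic invariance of the twisted convolution---but it organizes them around an explicit operator-level intertwining identity $\widehat{S}\,\Tp_g(m) = \Tp_{\widehat{S}g}(m\circ S^{-1})\,\widehat{S}$, which the paper never isolates in this form. The paper instead works directly with ambiguity functions: starting from $\|\widehat{S}f\|_{\MSpace{p,q}{}} = \|\AF(\widehat{S}f,g)\|_{L^{p,q}}$, it applies the Toeplitz lift, inserts the dilation $\mathcal{D}_S$ via symplectic covariance, rearranges weight factors, and after a second invocation of Theorem \ref{thm:lifting} produces a distribution $\widetilde{f}\in\MSpace{p,q}{m}$ with $\|\widehat{S}\widetilde{f}\|_{\MSpace{p,q}{m}}\asymp\|\widehat{S}f\|_{\MSpace{p,q}{}}$, to which the hypothesis is then applied. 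Your factorization $\widehat{S}f = \Tp_{\widehat{S}g}(m\circ S^{-1})\,\widehat{S}\,h$ with $h = \Tp_g(m)^{-1}f\in\MSpace{p,q}{m}$ achieves the same end more transparently, exhibiting the action of $\widehat{S}$ on $\MSpace{p,q}{}$ as conjugate, via Toeplitz isomorphisms, to its action on $\MSpace{p,q}{m}=\MSpace{p,q}{m\circ S^{-1}}$. What this buys is conceptual clarity and a reusable structural fact; what it costs is the need to confirm that the intertwining identity persists on the full relevant domain in $\mathcal{S}'(\R^d)$, which you correctly flag as the main point requiring care. The verification you sketch---match the ambiguity functions of both sides, using precisely the covariance and invariance facts that drive the paper's inline computation, then conclude by injectivity of $\AF(\cdot,\widehat{S}g)$ on $\mathcal{S}'(\R^d)$---is exactly right and no harder than the paper's own manipulations.
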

\begin{proof}
The first implication follows from the last corollary. Let now $\widehat{S}$ be a bounded operator from $\MSpace{p,q}{m} $ to $\MSpace{p,q}{m} $. We fix as a window the normalized Gaussian $g\in\mathcal{S}(\R^{d})$ and recall the symplectic covariance of the ambiguity function \eqref{eq:symplectic_covariance}.
For all $f\in \MSpace{p,q}{} $ holds
\begin{equation} 
    \begin{split}
        \norm{\widehat{S} f}_{\MSpace{p,q}{} } &= \norm{\AF(\widehat{S}f,g)}_{L^{p,q}} \myeq{\tiny\text{Lift}}{\underset{\tiny \eqref{eq:TpTwisted}}{\asymp}} \norm{\left(\tfrac{1}{m}\AF(\widehat{S}f,g)\right)\ \natural\ \AF(g,g)}_{L^{p,q}_m} \\[1ex]
\overset{\eqref{eq:symplectic_covariance}}{\textcolor{white}{=}}&= \norm{\left(\tfrac{1}{m} \mathcal{D}_S  \AF(f,\widehat{S}^{-1}g)\right)\ \natural\ \AF(g,g)}_{L^{p,q}_m} \\[1ex]
&= \norm{\left(
\tfrac{m\circ S^{-1}}{ m\circ S^{-1}}\tfrac{1}{\left( m \circ S\circ \right)S^{-1}} \ST\  \AF(f, \widehat{S}^{-1}g)\right)\ \natural\ \AF(g,g)}_{L^{p,q}_m} \\[1ex]
&= \norm{ \left((\ST)_{m\widetilde{m}^{-1}}\ (\widetilde{m}m^{-1} \AF(f,\widehat{S}^{-1}g)) \right)\ \natural\ \AF(g,g)}_{L^{p,q}_m}, \\
&= \norm{(\ST)_{m\widetilde{m}^{-1}} 
(\ST)_{m\widetilde{m}^{-1}}^{-1}\; \left(\left((\ST)_{m\widetilde{m}^{-1}}\ (\widetilde{m}m^{-1} \AF(f,\widehat{S}^{-1}g)) \right)\ \natural\ \AF(g,g)\right)}_{L^{p,q}_m},
    \end{split}
\end{equation}
where $\widetilde{m} = \tfrac{m}{\ST^{-1}m}$.
Recall that the symplectic matrices are exactly those which preserve the standard symplectic form. Therefore, the twisted convolution of two functions satisfies 
\begin{equation} 
    \begin{split}
        \left(F\natural G\right) (S^{-1} \lambda) &= \int_{\R^{2d}} F(\gamma)G(S^{-1}\lambda-\gamma)e^{\pi i \sigma(\gamma,S^{-1}\lambda)}\ d\gamma\\[1ex]
&= \int_{\R^{2d}} F(S^{-1}S\gamma)G(S^{-1}(\lambda-S\gamma))e^{\pi i \sigma(S\gamma,SS^{-1}\lambda)}\ d\gamma\\[1ex]
=& \int_{\R^{2d}} F(S^{-1}\gamma)G(S^{-1}(\lambda-\gamma))e^{\pi i \sigma(\gamma,\lambda)}
\ d\gamma= (F \circ S^{-1})\,\natural\,  (G\circ S^{-1}) (\lambda).
    \end{split}
\end{equation}
This, along with the symplectic covariance of the ambiguity function \eqref{eq:symplectic_covariance}, leads to
\begin{equation}
    \begin{split}
        \norm{\widehat{S} f}_{\MSpace{p,q}{} } \asymp &\norm{(\ST)_{m\widetilde{m}^{-1}}\;\left( \left(\widetilde{m}m^{-1}\cdot \AF(f,\widehat{S}^{-1}g) \right)\ \natural\ \AF(\widehat{S}^{-1}g,\widehat{S}^{-1}g)\right)}_{L^{p,q}_m}.
    \end{split}
\end{equation}
We recall that $\widehat{S}^{-1}g$ is a Schwartz function. By the lifting Theorem \ref{thm:lifting} and Equation \eqref{eq:TpTwisted}, there exists a tempered distribution $\widetilde{f}\in \MSpace{p,q}{\widetilde{m}^{-1}m}$ with
\begin{equation}
    \left(\widetilde{m}m^{-1}\cdot \AF(f,\widehat{S}^{-1}g) \right)\ \natural\ \AF(\widehat{S}^{-1}g,\widehat{S}^{-1}g) = \AF(\widetilde{f},\widehat{S}^{-1}g).
\end{equation}
Recall that Theorem \ref{thm:lifting} also provides the norm equivalence $\| \tilde{f} \|_{M^{p,q}_{\widetilde{m}^{-1}m}}  \asymp \| f \|_{M^{p,q}}$, where the implied constant is independent of $f \in \MSpace{p,q}{}$.
By assumption, $\widetilde{m}\asymp 1$, which implies $\MSpace{p,q}{\widetilde{m}^{-1}m} = \MSpace{p,q}{m}$ and
\begin{equation}
\norm{\widetilde{f}}_{\MSpace{p,q}{m} }\asymp \norm{\widetilde{f}}_{\MSpace{p,q}{\widetilde{m}^{-1}m}}\asymp \norm{f}_{\MSpace{p,q}{} }.
\end{equation}
Putting it all together, we obtain
\begin{equation}
    \begin{split}
        \norm{\widehat{S} f}_{\MSpace{p,q}{} } \asymp &\norm{(\ST)_m \AF(\widetilde{f},\widehat{S}^{-1}g)}_{L^{p,q}_m} = \norm{\AF(\widehat{S}\widetilde{f},g)}_{L^{p,q}_m}\\[1ex] 
\asymp& \norm{\widehat{S}\widetilde{f}}_{\MSpace{p,q}{m} }  \lesssim \norm{\widetilde{f}}_{\MSpace{p,q}{m} } \asymp \norm{f}_{\MSpace{p,q}{} }.
    \end{split}
\end{equation}
This proves the claim.
\end{proof}

\subsection{Examples}
We consider two of the standard weights which are frequently used in time-frequency analysis and determine the cases where our criteria are applicable. We refer to \cite{Groechenig2007} for a general overview of typical weights.
\begin{example}
We begin with
\begin{equation}
m(z)= m_{s,t}(z) = (1+\norm{z})^s(\ln(e+\norm{z}))^t.
\end{equation}
This class contains in particular the radial polynomial weights.
Here it is well-known that $\| z \| \asymp \| S z \|$ holds for \textit{any} invertible matrix $S \in \R^{2d \times 2d}$. This immediately shows the equivalence $m_{s,0} \asymp m_{s,0} \circ S$.

For the logarithmic part, we spell out the norm equivalence as 
\[
 A \| z \| \le \| S z \| \le B \| z \|~, 
\] with $0< A\leq B$, and obtain 
\begin{equation}
    \begin{split}
        \frac{\ln(e+\norm{z})}{\ln(e+B\norm{z})} \leq \frac{\ln(e+\norm{z})}{\ln(e+\norm{Sz})}\leq  \frac{\ln(e+\norm{z})}{\ln(e+A\norm{z})},
    \end{split}
\end{equation}
We next establish bounds for the ends of this chain of inequalities as $\|z \| \to 0$ and $\| z \| \to \infty$. Using l'Hospital's rule allows computing the limits 
\begin{equation}
\lim\limits_{z\to 0} \frac{\ln(e+\norm{z})}{\ln(e+A \norm{z})}= \lim\limits_{z\to 0} \frac{\ln(e+\norm{z})}{\ln(e+B \norm{z})}=  1~,
\end{equation}
as well as 
\begin{equation}
\lim\limits_{z\to \infty} \frac{\ln(e+\norm{z})}{\ln(e+A \norm{z})}= \lim\limits_{z\to \infty} \frac{\ln(e+\norm{z})}{\ln(e+B \norm{z})} = 1~.
\end{equation}
As a consequence there exists $\varepsilon>0$ such that 
\[
\frac{1}{2} < \frac{\ln(e+\norm{z})}{\ln(e+\norm{Sz})} < 2
\] holds for all $z \in \mathbb{R}^{2d}$ satisfying either $\norm{z}< \epsilon$ or $\norm{z}> \epsilon^{-1}$. However, the remaining $z$ constitute a compact set, and $z \mapsto \frac{\ln(e+\norm{z})}{\ln(e+\norm{Sz})}$ is a nonvanishing continuous function, hence there exist nontrivial lower and upper bounds valid on this set. This shows $m_{0,t} \asymp m_{0,t} \circ S$, for all $t$.

Combining our results so far, we obtain for fixed $s,t$ that
\begin{equation}
    \frac{m_{s,t}}{m_{s,t}\circ S}\asymp 1 ~.
\end{equation}

Hence \thref{cor:equiv_weight} together with \thref{thm:classification} provides the conclusion that the operator $\widehat{S}:\MSpace{p,q}{m_{s,t}} \to \MSpace{p,q}{m_{s,t}}$ is bounded iff $p=q$, or $p \not= q$ and $S$ is upper block triangular. Since the latter criterion applies to $S$ iff it applies to its inverse, we further obtain that boundedness of $\widehat{S}$ on $\MSpace{p,q}{m_{s,t}}$ already implies that it is an automorphism. 
\end{example}
We now turn to the second type of polynomial weight functions, where the condition $m \asymp m \circ S$ is actually restrictive.
\begin{example}
According to \cite[Ex. 11.1.1.]{Groechenig2001}, it holds
\begin{equation}
    (1+\norm{z})^s\asymp (1+\norm{x}+\norm{\omega})^s,\qquad x,\omega\in\R^{d},\,z=(x,\omega).
\end{equation}
This weight is symmetric in $(x,\omega)$. However, sometimes, we are only interested in the growth in one direction. This motivates a different class of weight functions, given by 
\begin{equation}
    \begin{split}
            p_{s}(x,\omega)&=p_s(x)=(1+\norm{x})^s, \qquad x,\omega\in\R^{d}, \\
    q_{t}(x,\omega)&=q_t(\omega)=(1+\norm{\omega})^t, \qquad x,\omega\in\R^{d},    
    \end{split}
\end{equation}
with $s,t \in \mathbb{R}$. Since we are only interested in nontrivial weights, we assume $s \not= 0 \not= t$ throughout.  
We use the block-matrix notation 
\begin{equation}
    S = \begin{pmatrix}
A&B\\C&D
\end{pmatrix}\qquad \text{and} \qquad z=\begin{pmatrix}
x\\ \omega
\end{pmatrix},
\end{equation}
and get 
\begin{equation}
\frac{(p_s\circ S)(z)}{p_s(z)} = \left(\frac{1+\norm{Ax+B\omega}}{1+\norm{x}}\right)^{s}
\end{equation}
We first show that $p_s \circ S \asymp p_s$ holds iff $S$ is lower block triangular, i.e., iff $B =0$.

To see the "only-if" part, 
assume $B\neq 0$ and $s>0$. There is $\omega\in\R^{d}$ with $\norm{B\omega}>\norm{Ae_1}$. Therefore,
\begin{equation}
    \left(\frac{1+\norm{Ae_1+B\,n\omega}}{1+\norm{e_1}}\right)^{s} \geq \left(\frac{1+n\norm{B\omega}-\norm{Ae_1}}{2}\right)^{s}\to \infty\quad (n\to\infty).
\end{equation}
In the case $s<0$, we observe that if $S$ is not a lower block triangular matrix, the same applies to $S^{-1}$. Using $z' = S z$, we can use the previous calculation to establish the unboundedness of $\frac{p_s}{p_s\circ S}$. Hence we have $p_s \circ S \not\asymp p_s$ for al $s \not=0$.

Conversely, if $B=0$, $A$ is invertible, and we have the well-known fact that 
\begin{equation}
\frac{p_s\circ S}{p_s}(x,\omega)= \left(\frac{1+\norm{Ax}}{1+\norm{x}}\right)^{s} \asymp 1~.
\end{equation}

This implies that in the case where $S$ is not lower block triangular, neither \thref{thm:multiplication_operator_M_m} nor \thref{thm:main_weighted} is applicable, leaving the question of boundedness of $\widehat{S}$ on $\MSpace{p,q}{p_s}$ wide open. 

In the case where $S$ is lower block triangular, the combination of \thref{cor:equiv_weight} and \thref{thm:classification} yields that $\widehat{S}$ is bounded iff $S$ is upper block triangular, or equivalently, iff $S$ is block-diagonal. 

For the weight $q_t$ with $t \not= 0$, the analogous reasoning as for $p_s$ yields that $q_t \circ S \asymp q_t$ holds iff $S$ is \textit{upper} block triangular. Again, this leaves the question of boundedness of $\widehat{S}$ open for matrices that are not upper block triangular. However, for matrices $S$ that are upper block triangular, \thref{cor:equiv_weight} together with \thref{thm:classification} already implies the boundedness of $\widehat{S}$ on $\MSpace{p,q}{q_t}$, without further restrictions on $S$.  
\end{example}

\section*{Concluding remarks}

Our results concerning the mapping behavior of metaplectic operators $\widehat{S}$ on unweighted modulation spaces give a complete picture. To our knowledge, even the mapping properties of the associated dilation operators $\mathcal{D}_S: L^{p,q}(\mathbb{R}^{2d}) \to L^{p,q}(\mathbb{R}^{2d})$ were not fully clarified prior to our paper, and the parts of Theorem \ref{thm:main_unweighted} dealing with this problem seem to be new. 

The weighted case poses several novel challenges. The first one concerns the choice of weights: Our paper concentrates on even polynomial weights, in order to apply results from \cite{GroechenigToft2011}, and we are not aware of extensions of these results to other weight classes. Note that the arguments of Gr\"ochenig and Toft rely on some fairly subtle properties of algebras of symbol classes, and to us, it seems open whether generalizations of the desired type are available. 

The second unresolved question concerns the mapping properties of metaplectic operators $\widehat{S}:\MSpace{p,q}{m} \to \MSpace{p,q}{m}$ in the case where $\widehat{S}$ is unbounded on the unweighted modulation space $\MSpace{p,q}{}$, and the weights $m$ and $m \circ S^{-1}$ are not equivalent. We presented examples that this case can occur for fairly natural choices of weights. 
Understanding the associated measure-theoretic question, concerning the boundedness properties of the dilation operator $\mathcal{D}_S: L^{p,q}_m(\R^{2d}) \to L^{p,q}_m(\R^{2d})$ clearly constitutes an important step towards answering the question in full. 

\begin{appendix}
\section{Calculation for Equation \texorpdfstring{\eqref{eq:calculation_A_Gauss}}{(3.1)}}\label{app:calculation_A_Gauss}
To prove the unboundedness of $\mathcal{D}_S$ when $S$ is not an upper block triangular matrix, we need the following technical lemma. 
\begin{lemma}\label{lemma:NormFormel}
Let $A=A^T\in \R^{n\times n}$ and $\beta\in\R^{d}$ be given.
It holds
\begin{equation}
\mathcal{I}\coloneqq\int_{\R^{d}} \exp(-\pi x\cdot Ax+2\pi \beta\cdot  x)\ dx = \abs{\det  A}^{-\frac{1}{2}}\ \exp(\pi \beta\cdot  A^{-1}\beta),
\end{equation}
where the integral converges if and only if $A$ is strictly positive definite.
\end{lemma}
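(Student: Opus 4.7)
The plan is to establish the formula by the classical completion-of-the-square argument, followed by orthogonal diagonalization of $A$. First, under the tentative assumption that $A$ is invertible, I use the symmetry $A = A^T$ to rewrite the exponent as
\[
-\pi x \cdot Ax + 2\pi \beta \cdot x = -\pi (x - A^{-1}\beta) \cdot A(x - A^{-1}\beta) + \pi \beta \cdot A^{-1}\beta.
\]
Translation invariance of Lebesgue measure then reduces the claim to computing the centered Gaussian integral $\int_{\R^d} e^{-\pi y \cdot A y}\,dy$, with the factor $e^{\pi \beta \cdot A^{-1}\beta}$ pulled out in front.

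Since $A$ is real symmetric, I would diagonalize $A = U\Lambda U^T$ with $U$ orthogonal and $\Lambda = \diag(\lambda_1,\dots,\lambda_d)$. The substitution $z = U^T y$ has unit Jacobian (because $\abs{\det U} = 1$) and decouples the integral into a product of one-dimensional Gaussians:
\[
\int_{\R^d} e^{-\pi y \cdot A y}\,dy = \prod_{i=1}^d \int_{\R} e^{-\pi \lambda_i z_i^2}\,dz_i.
\]
When $\lambda_i > 0$, a rescaling $t \mapsto \sqrt{\lambda_i}\,t$ reduces each factor to the normalized Gaussian $\int_\R e^{-\pi t^2}\,dt = 1$ and yields $\lambda_i^{-1/2}$. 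Multiplying over $i$ gives $\bigl(\prod_i \lambda_i\bigr)^{-1/2} = \abs{\det A}^{-1/2}$, and combining with the factor from the completion of the square produces the asserted identity.

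For the convergence dichotomy, I would argue that if some eigenvalue $\lambda_i$ fails to be strictly positive, the integral must diverge. Indeed, performing the orthogonal change of variables without first completing the square shows that the integrand, viewed as a function of $z_i$, has the form $e^{-\pi \lambda_i z_i^2 + 2\pi \beta'_i z_i}$ for some $\beta'_i \in \R$; this is non-integrable on $\R$ precisely when $\lambda_i \le 0$, regardless of the value of $\beta'_i$. By Fubini's theorem applied to the non-negative integrand $\abs{e^{-\pi x \cdot A x + 2\pi \beta \cdot x}} = e^{-\pi x \cdot A x + 2\pi \beta \cdot x}$, divergence of any one-dimensional slice forces divergence of the full integral. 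Conversely, the computation above shows that strict positive definiteness of $A$ is sufficient for convergence, completing the equivalence.

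There is no real obstacle here, as the argument is textbook; the only care required is to keep track of the case distinctions in the convergence statement and to justify the interchange of products and integrals via Fubini on the non-negative integrand.
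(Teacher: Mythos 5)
Your proof is correct and follows essentially the same route as the paper's: orthogonal diagonalization to decouple into one-dimensional Gaussians, completion of the square, and a Tonelli-type observation on the non-negative integrand for the convergence dichotomy. The only difference is a cosmetic reordering — you complete the square first (tentatively assuming invertibility) and then diagonalize, whereas the paper diagonalizes first and completes the square in the decoupled coordinates, which handles the convergence statement a bit more uniformly.
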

\begin{proof}
Let $A=U^T\Lambda U$ with $U$ orthogonal und $\Lambda$ diagonal. Recall that the Lebesgue measure is invariant under the action of the Euclidean group. The integrand is non-negative, so $\mathcal{I}\in[0,\infty]$ is well-defined.
\begin{equation}
\mathcal{I} = 
\int_{\R^{d}} \exp(-\pi (Ux)\cdot \Lambda Ux+2\pi \beta\cdot  U^T Ux)\ dx
= \int_{\R^{d}} \exp(-\pi x\cdot \Lambda x+2\pi (U\beta)\cdot  x)\ dx.
\end{equation}
The integral 
\begin{equation}
\int_{\R}e^{-at^2+bt}dt <\infty
\end{equation}
is finite if and only if $a>0$. Hence $\mathcal{I}<\infty$ if and only if $\Lambda$ is strictly positive  definite.
\begin{equation}
    \begin{split}
        \mathcal{I} &= 
\int_{\R^{d}} \exp\Big(-\pi (\Lambda^{\frac{1}{2}} x)\cdot  (\Lambda^{\frac{1}{2}} x)+2\pi 
(\Lambda^{-\frac{1}{2}}U\beta)\cdot \ \Lambda^{\frac{1}{2}}x \\
&\qquad\qquad -\pi
(\Lambda^{-\frac{1}{2}}U\beta)\cdot \ (\Lambda^{-\frac{1}{2}}U\beta)+\pi 
(\Lambda^{-\frac{1}{2}}U\beta)\cdot \ (\Lambda^{-\frac{1}{2}}U\beta)\Big)\ dx \\
{=}&\abs{\det \ \Lambda^{\frac{1}{2}}}^{-1} \int_{\R^{d}} \exp\Big(-\pi x\cdot x+ 
\pi(\Lambda^{-\frac{1}{2}}U\beta)\cdot \ (\Lambda^{-\frac{1}{2}}U\beta)\Big)\ dx \\
 &= \abs{\det \ U^T\Lambda U}^{-\frac{1}{2}}\ \exp(\pi \beta\cdot  U^T\Lambda^{-\frac{1}{2}}\Lambda^{-\frac{1}{2}} U\beta) 
 = \abs{\det \ A}^{-\frac{1}{2}}\ \exp(\pi \beta\cdot  A^{-1}\beta). 
    \end{split}
\end{equation}
\end{proof}

We now compute the ambiguity function of the Gaussian.
\begin{equation}
    \begin{split}
        &\AF\left(\ g\circ (\varepsilon^2-1)^{\frac{1}{2}}I, g\ \right)(x,\omega)\\
&=e^{\pi i x\cdot \omega}\int_{\R^{d}} \exp\left(-\pi t\cdot (E^2-I)t-\pi(t-x)\cdot (t-x)-2\pi i \omega\cdot  t\right)\ dt \\
&=e^{\pi i x\cdot \omega}\int_{\R^{d}} \exp \left(-\pi t\cdot  E^2t+2\pi x\cdot t-\pi x\cdot  x-2\pi i \omega\cdot  t\right)\ dt \\
=&\;e^{\pi i x\cdot \omega}\abs{\det\ E}^{-1}\int_{\R^{d}} \exp \Big(-\pi t\cdot t+2\pi(E^{-1}x)\cdot t-\pi x\cdot  E^{-2}x  \\
&\qquad\qquad-\pi x\cdot (I-E^{-2})x-2\pi i (E^{-1}\omega)\cdot  (t-E^{-1}x) - 2\pi i (E^{-1}\omega)\cdot  E^{-1}x \Big)\ dt \\[2ex]
\myeq{T_{E^{-1}x}}{=}&\; \abs{\det \ E}^{-1} e^{-\pi x\cdot (I-E^{-2})x - 2\pi i (E^{-1}\omega)\cdot  E^{-1}x+\pi i x\cdot \omega}\cdot
\int_{\R^{d}} \exp \left(-\pi t\cdot t - 2\pi i (E^{-1}\omega)\cdot  t\right)\ dt \\
\myeq{\mathcal{F}g=g}{=}&\; \abs{\det \ E}^{-1} e^{-\pi x\cdot (I-E^{-2})x - 2\pi i (E^{-1}\omega)\cdot  E^{-1}x +\pi i x\cdot \omega- \pi\omega\cdot  E^{-2}\omega} \\
\myeq{\Delta^2+E^{-2}=I}{=}&\; \abs{\det \ E}^{-1} e^{\pi i x\cdot \omega- 2\pi i \omega\cdot  E^{-2}x}e^{-\pi x\cdot \Delta^2 x} e^{- \pi\omega\cdot  E^{-2}\omega}.
    \end{split}
\end{equation}
\section{Calculation for Equation \texorpdfstring{\eqref{eq:calculation_unweighted_norm}}{(3.2)}}\label{app:calculation_unweighted_norm}

We are concerned with the $L^{p,q}$ norm of
\begin{equation}
\mathcal{D}_S f(x,\omega) = \exp \left(-\pi (Ax+B\omega)\cdot \Delta^2(Ax+B\omega)-\pi(Cx+D\omega)\cdot  E^{-2}(Cx+D\omega)\right).
\end{equation}
To this end, we group the scalar products:
\begin{align}
        &\ \quad x\cdot A^T\Delta^2 A x+ 2\omega\cdot B^T\Delta^2 Ax +\omega\cdot B^T\Delta^2 B\omega \\[1.5ex]
&\qquad\qquad + x\cdot C^T E^{-2}C^T x+2\omega\cdot D^T E^{-2} Cx+\omega\cdot D^T E^{-2}D\omega \\[2ex]
&= x\cdot(A^T\Delta^2A +C^T E^{-2}C)x +2\omega\cdot(B^T\Delta^2 A+D^T E^{-2}C)x+\omega\cdot (B^T\Delta^2 B+D^T E^{-2}D)\omega.
\end{align}
For a better overview, we define
\begin{equation}
    \begin{split}
        \Sigma &\coloneqq A^T\Delta^2A +C^T E^{-2}C, \\
    \beta &\coloneqq B^T\Delta^2A+D^T E^{-2}C \\
&\text{and} \\
\Omega&\coloneqq B^T\Delta^2 B+D^T E^{-2}D-\beta\Sigma^{-1}\beta^T.
    \end{split}
\end{equation}
With the new notation, it holds
\begin{equation}
\norm{\mathcal{D}_S f}_{{p,q}} = \norm{\omega\mapsto e^{-\pi\omega\cdot(\Omega+\beta\Sigma^{-1}\beta^T)\omega}\cdot\norm{x\mapsto e^{-\pi x\cdot\Sigma x-2\pi\omega\cdot\beta x}}_p}_q.
\end{equation}
By the definition of $\Sigma$, its spectrum is a subset of $[0,\infty)$. In the cases that we will observe, the spectrum of $\Sigma$ and the spectrum of $\Omega$ will lie in $(0,\infty)$. This is no accident, since $\AF( \varphi,g)\in \mathcal{S}(\R^{d})$ for all $\varphi\in \mathcal{S}(\R^{d})\subset \MSpace{p,q}{}$. If $p<\infty$, the inner integral equals
\begin{equation}\begin{split}
n(\omega)&=\norm{x\mapsto e^{-p\pi x\cdot\Sigma x-2p\pi\omega\cdot\beta x}}_1^{\frac{1}{p}} \myeq{\ref{lemma:NormFormel}}{=} \abs{\det \ p\Sigma}^{-\frac{1}{2p}}\ \left(e^{p\pi(\omega\cdot\beta)\Sigma^{-1}\beta^T\omega}\right)^{\frac{1}{p}}\\
&{\asymp} \abs{\det \ \Sigma}^{-\frac{1}{2p}}e^{\pi\omega\cdot\beta\Sigma^{-1}\beta^T\omega}.
\end{split}\end{equation}
\\[1ex]
If $p=\infty$, then the function
\begin{equation}
    F(x,\omega)\coloneqq e^{-\pi x\cdot\Sigma x-2\pi\omega\cdot\beta x} = e^{-\pi (x+\Sigma^{-1}\beta^T\omega)\cdot\Sigma (x+\Sigma^{-1}\beta^T\omega) +\pi \omega\cdot \beta\Sigma^{-1} \beta^T\, \omega}\leq e^{\pi \omega\cdot \beta\Sigma^{-1} \beta^T\, \omega} ~,
\end{equation}
with equality if and only if $x = - \Sigma^{-1}\beta^T\, \omega$.
Hence, for all $p\in[1,\infty]$ holds
\begin{equation}\begin{split}
\norm{\mathcal{D}_S f}_{{p,q}} &\asymp \abs{\det \ \Sigma}^{-\frac{1}{2p}}\norm{\omega\mapsto
 e^{-\pi\omega\cdot(\Omega+\beta\Sigma^{-1}\beta^T)\omega}\cdot e^{\pi\omega\cdot \beta\Sigma^{-1}\beta^T\omega}}_q \\
&= \abs{\det \ \Sigma}^{-\frac{1}{2p}}\norm{\omega\mapsto e^{-\pi\omega\cdot \Omega\omega}}_q.
\end{split}\end{equation}
For the case $q=\infty$, it immediately follows that the remaining norm is $1$. For all other $q\in [1,\infty)$ holds
\begin{equation}
\norm{\mathcal{D}_S f}_{{p,q}} 
\asymp \abs{\det \ \Sigma}^{-\frac{1}{2p}}\ \abs{\det (q\ \Omega)}^{-\frac{1}{2q}}\asymp \abs{\det \ \Sigma}^{-\frac{1}{2p}} \abs{\det \ \Omega}^{-\frac{1}{2q}}.
\end{equation}
All in all, 
\begin{equation}
    \norm{\mathcal{D}_S f}_{{p,q}} \asymp \abs{\det \ \Sigma}^{-\frac{1}{2p}} \abs{\det \ \Omega}^{-\frac{1}{2q}}
\end{equation}
for all $p,q\in [1,\infty]$.
\end{appendix}
\nocite{CauliEtAl2019}
\bibliographystyle{acm}

\bibliography{mybib}

\end{document}